\newtheorem{theorem}{Theorem}[section]
\newtheorem{lemma}[theorem]{Lemma}
\newtheorem{corollary}[theorem]{Corollary}
\numberwithin{figure}{section}
\theoremstyle{definition}
\newtheorem{definition}[theorem]{Definition}
\theoremstyle{remark}
\numberwithin{equation}{section}
\newcommand{\Om} {\Omega}
\newcommand\R{\mathbb R}
\begin{document}

\title[On the weighted Steklov eigenvalue problems]
{On the weighted Steklov eigenvalue problems in outward cuspidal domains}

\author{Prashanta Garain, Vladimir Gol'dshtein, Alexander Ukhlov}

\begin{abstract}
In this article, we investigate the weighted Steklov eigenvalue problem and the weighted Schr\"odinger--Steklov  eigenvalue problem in outward cuspidal domains. We prove the solvability of these spectral problems in both linear and non-linear cases.
\end{abstract}

\maketitle

\footnotetext{\textbf{Key words and phrases:} Sobolev spaces, Steklov eigenvalue problem, $p$-Laplacian.}

\footnotetext{\textbf{2020
Mathematics Subject Classification:} 35P30,46E35.}

\section{Introduction}

The Steklov-type eigenvalue problems arise in various fields of continuum mechanics, including fluid mechanics and elasticity (see, for example, \cite{BS, CPV}). These problems have garnered increasing attention in recent years (see, for example, \cite{B21, BR, CGGS, FL, MN10, XW}).

The classical Steklov eigenvalue problem \cite{Steklov} in a bounded domain $\Omega \subset \mathbb{R}^n$, $n \geq 2$, with a piecewise smooth boundary $\partial \Omega$, can be formulated as follows:
\begin{equation}
\label{steq}
\begin{cases}
-\Delta u=0\,\,\text{in}\,\, \Omega,\\
\nabla u\cdot\nu =\lambda w u \,\, \text{on}\,\, \partial\Omega,
\end{cases}
\end{equation}
where $\nu$ is the unit outward normal to $\partial \Omega$ and $w$ is a non-negative bounded weight function. In the case of Lipschitz domains $\Omega \subset \mathbb{R}^n$, $n \geq 2$, the classical Steklov eigenvalue problem has a long history and is sufficiently well studied (see \cite{AM, GP17, KLP} and references therein).

In recent years, there has been increasing attention on the geometric analysis of PDEs in cuspidal domains; see, for example, \cite{GU09,GU17,KLP,KUZ,MP}. Recall the notion of outward cuspidal domains \cite{GV,KUZ}. Let $\gamma:[0,1] \to [0,\infty)$ be a continuous, increasing, and differentiable function such that $\gamma(0)=0$ and $\gamma(1)=1$. In addition, let $\gamma'$ be increasing on $(0, 1)$, with $\lim_{t \to 0^{+}} \gamma'(t) = 0$. The basic example of such functions is $\gamma(t) = t^{\alpha}$, where $1 < \alpha < \infty$.  Denote $x'=(x_1,...,x_{n-1})$. Then an outward cuspidal domain $\Omega_{\gamma}\subset\mathbb{R}^n$, $n\geq 2$, is defined by
\begin{multline}
\label{cusp}
\Omega_{\gamma}=\\
\left\{(x',x_n)\in \mathbb R^{n-1}\times \mathbb R : \sqrt{x_1^2+...+x_{n-1}^2}<\gamma(x_n), 0<x_n\leq 1\right\}\cup B^n\left((0,2), \sqrt{2}\right),
\end{multline}
where $B^n\left((0,2), \sqrt{2}\right)\subset\mathbb{R}^n$ is the open ball of radius $\sqrt{2}$ centered at $(0,2)\in\mathbb{R}^{n-1}\times\mathbb{R}$.

In the case of outward cuspidal domains $\Omega_{\gamma} \subset \mathbb{R}^n$, the Steklov eigenvalue problem generally represents an open and complicated problem (see, for example, \cite{AM20}). In \cite{NT}, it was proved that for the cusp function $\gamma(t) = t^{\alpha}$, the unweighted Steklov problem has a discrete spectrum if $1 < \alpha < 2$. If $\alpha = 2$, the spectrum has a continuous part, and the point of the spectrum $\lambda_0 = 0$ belongs to the continuous spectrum for $\alpha > 2$.

In this article, by using the weighted trace embedding theorems \cite{GV}, we suggest the solution of the classical weighted Steklov eigenvalue problem in outward cuspidal domains $\Omega_{\gamma} \subset \mathbb{R}^n$. By leveraging the compactness of the weighted trace embedding operator \cite{GV},
$$
i: W^{1,2}(\Omega_{\gamma}) \hookrightarrow L^2_w(\partial \Omega_{\gamma}),
$$
we demonstrate that this weighted eigenvalue problem has a discrete spectrum, which can be expressed as a non-decreasing sequence:
\[
0 = \lambda_0 < \lambda_1 \leq \lambda_2 \leq \ldots \leq \lambda_k \leq \ldots,
\]
where the weight $w$ is defined by the cusp function $\gamma(t)$, corresponding to the trace theorem \cite{GV} (see Section 2). Note that this result holds for cusp functions $\gamma(t) = t^{\alpha}$ for all $1 < \alpha < \infty$.

Thus, we can conclude that the unweighted Steklov problem does not have a discrete spectrum in strong outward cuspidal domains \cite{NT}. However, it does have a discrete spectrum when we consider the weighted Steklov problem with weights corresponding to the geometry of the cusp.

In the second part of the article, we consider the Steklov type $p$-eigenvalue problem for $1 < p < \infty$:
\begin{equation}
\label{steqn}
\begin{cases}
-\Delta_{p} u+|u|^{p-2}u&=0\text{ in }\Omega_{\gamma},\\
|\nabla u|^{p-2}\nabla u\cdot\nu&=\lambda w |u|^{p-2}u\text{ on }\partial\Omega_{\gamma}.
\end{cases}
\end{equation}
We remark that equation \eqref{steqn} can be linear ($p=2$) or non-linear ($p \neq 2$). Such problems represent the Schr\"odinger--Steklov eigenvalue problem. Moreover, they are connected with the Sobolev trace inequality: there exists a constant $S>0$ such that the inequality
\begin{equation}
\label{eqn_s}
S^{\frac{1}{p}} \|u\|_{L^p_w(\partial \Omega_{\gamma})} \leq \|u\|_{W^{1,p}(\Omega_{\gamma})}
\end{equation}
holds for any function $u\in W^{1,p}(\Omega_{\gamma})$.

The optimal constant $S$ in the inequality $($\ref{eqn_s}$)$ coincides with the principal eigenvalue of the associated Schr\"odinger--Steklov problem.

We suggest an approach based on the compactness of the trace embedding operators of Sobolev spaces into weighted Lebesgue spaces with weights associated with the cusp function of outward cuspidal domains. By using the compactness of the trace embedding operator \cite{GV},
$$
i: W^{1,p}(\Omega_{\gamma}) \hookrightarrow L^p_w(\partial \Omega_{\gamma}),
$$
we consider the Rayleigh--Steklov quotient:
$$
R(u) = \frac{\int_{\Omega_{\gamma}} (|\nabla u|^p + |u|^p) \, dx}{\int_{\partial \Omega_{\gamma}} |u|^p w \, ds}.
$$
Using this Rayleigh--Steklov quotient, we prove the variational characterization of the weighted Steklov eigenvalues in outward cuspidal domains $\Omega_{\gamma} \subset \mathbb{R}^n$.

In the final part of the article, we use the inverse iteration method to demonstrate the existence of a non-increasing sequence of eigenvalues for the non-linear problem. In addition, we establish a convergence result for the corresponding sequence of eigenfunctions.

The paper is organized as follows: In Section 2, we discuss the functional setting. In Section 3, we study the weighted linear Steklov problem and the weighted linear Schr\"odinger--Steklov problem. Section 4 is devoted to the non-linear weighted Schr\"odinger--Steklov problem. Finally, in Section 5, we establish existence results for the weighted Steklov $p$-eigenvalue problem by using the inverse iteration method.

\section{Functional setting}

Let us recall the basic notions of the Sobolev spaces.
Let $\Omega$ be an open subset of $\mathbb R^n$. The Sobolev space $W^{1,p}(\Omega)$, $1<p<\infty$, is defined \cite{M}
as a Banach space of locally integrable weakly differentiable functions
$u:\Omega\to\mathbb{R}$ equipped with the following norm:
$$
\|u\|_{W^{1,p}(\Omega)}=\left(\int_{\Omega}|\nabla u(x)|^p\,dx+\int_{\Omega}|u(x)|^p\,dx\right)^\frac{1}{p},
$$
where $\nabla u$ is the weak gradient of the function $u$, i.~e. $ \nabla u = (\frac{\partial u}{\partial x_1},...,\frac{\partial u}{\partial x_n})$.

The following result, can be found, for example in \cite[Proposition~9.1]{Br10}, \cite[Paragraph~1.4]{IC} and \cite{M}.
\begin{lemma}
\label{Xuthm}
The space $W^{1,p}(\Omega)$, $1<p<\infty$, is real separable and uniformly convex Banach space.
\end{lemma}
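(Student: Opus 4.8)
The plan is to realize $W^{1,p}(\Omega)$ isometrically as a closed subspace of a product of Lebesgue spaces, and then transport the three desired properties from those Lebesgue spaces, where they are classical. Concretely, introduce the auxiliary Banach space
\[
Y = L^p(\Omega) \times L^p(\Omega;\mathbb{R}^n),
\]
equipped with the norm $\|(f,G)\|_Y = \left(\|f\|_{L^p(\Omega)}^p + \|G\|_{L^p(\Omega;\mathbb{R}^n)}^p\right)^{1/p}$, where $L^p(\Omega;\mathbb{R}^n)$ carries the pointwise Euclidean norm. Define the linear map $T\colon W^{1,p}(\Omega) \to Y$ by $Tu = (u, \nabla u)$. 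By the very definition of the Sobolev norm one has $\|Tu\|_Y = \|u\|_{W^{1,p}(\Omega)}$, so $T$ is an isometry onto its image $T(W^{1,p}(\Omega)) \subset Y$. Each property I want is stable under isometric isomorphism and is inherited by closed subspaces, so it suffices to prove that $Y$ has these properties and that $T(W^{1,p}(\Omega))$ is closed in $Y$.

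For the space $Y$: the scalar factor $L^p(\Omega)$ is separable and, by Clarkson's inequalities, uniformly convex for $1<p<\infty$; the vector-valued factor $L^p(\Omega;\mathbb{R}^n)$ with the Euclidean norm on $\mathbb{R}^n$ is separable and uniformly convex for the same reason, since $\mathbb{R}^n$ is a finite-dimensional Hilbert space. A finite $\ell^p$-product of separable (respectively, uniformly convex) Banach spaces is again separable (respectively, uniformly convex) for $1<p<\infty$; hence $Y$ is a separable, uniformly convex Banach space. Passing to the closed subspace $T(W^{1,p}(\Omega))$ preserves both properties, and since a closed subspace of a complete space is complete, this also yields that $W^{1,p}(\Omega)$ is a Banach space.

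The heart of the argument, and the step I expect to be the main obstacle, is the closedness of $T(W^{1,p}(\Omega))$ in $Y$, which is exactly the completeness of $W^{1,p}(\Omega)$. Suppose $Tu_k = (u_k, \nabla u_k)$ converges in $Y$ to some pair $(u, G)$; I must show that $(u,G)$ lies in the image, i.e. that $G$ is the weak gradient of $u$. Fix a test function $\varphi \in C_c^\infty(\Omega)$ and pass to the limit in the defining identity
\[
\int_\Omega u_k\, \partial_i \varphi \, dx = -\int_\Omega (\partial_i u_k)\, \varphi \, dx,
\qquad i = 1,\dots,n.
\]
Since $u_k \to u$ and $\partial_i u_k \to G_i$ in $L^p(\Omega)$ while $\varphi,\partial_i\varphi \in L^{p'}(\Omega)$ have compact support, Hölder's inequality permits passage to the limit on both sides, giving $\int_\Omega u\,\partial_i\varphi\,dx = -\int_\Omega G_i\,\varphi\,dx$ for every $\varphi$. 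Thus $G_i = \partial_i u$ weakly, so $G = \nabla u$ and $(u,G) = Tu \in T(W^{1,p}(\Omega))$, which proves closedness.

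Finally, a word on the choice of norm. Uniform convexity is \emph{not} invariant under passage to an equivalent norm, so it is essential that the embedding $T$ reproduces the stated Sobolev norm exactly; the factorization of $Y$ above, with the Euclidean norm on the gradient component and the $\ell^p$-sum combining it with the function component, is designed precisely so that $\|Tu\|_Y = \|u\|_{W^{1,p}(\Omega)}$. With this identification, separability, completeness, and uniform convexity follow simultaneously, and the only genuinely analytic input is the weak-derivative limit argument carried out above; the remaining facts about $L^p$ spaces are standard and may be quoted from the references already cited.
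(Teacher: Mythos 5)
Your proof is correct. Note, however, that the paper does not prove this lemma at all: it simply cites \cite[Proposition~9.1]{Br10}, \cite[Paragraph~1.4]{IC} and \cite{M}, so there is no internal argument to compare against. What you have written is essentially the standard proof from the first of those references: the isometric embedding $Tu=(u,\nabla u)$ of $W^{1,p}(\Omega)$ into an $\ell^p$-product of Lebesgue spaces, the closedness of the image via passage to the limit in the weak-derivative identity, and the inheritance of separability and uniform convexity by closed subspaces. All the steps are sound, and you are right to emphasize that uniform convexity is norm-sensitive, so the isometry (rather than a mere isomorphism) is essential. The one place where you lean on a fact that deserves its own justification is the claim that a finite $\ell^p$-sum of uniformly convex spaces is uniformly convex and that $L^p(\Omega;\mathbb{R}^n)$ with the pointwise Euclidean norm is uniformly convex: the scalar Clarkson inequalities do not literally cover either statement, and one should invoke Day's theorem on $\ell^p$-products (or, equivalently, observe that your space $Y$ embeds isometrically into a vector-valued $L^p(\mu;\ell^2_{n+1})$, for which the Clarkson inequalities hold because the target is a Hilbert space). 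This is a standard and true fact, so it is a matter of attribution rather than a gap; with that reference added, your argument is a complete and self-contained replacement for the citation the paper relies on.
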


Let $E\subset\mathbb R^n$ be a Borel set E. Then $E$ is said to be $H^{m}$-rectifiable set \cite{Fe69}, if $E$ is of Hausdorff dimension $m$, and there exists a countable collection $\{\varphi_i\}_{i\in\mathbb{N}}$ of Lipschitz continuous mappings
$$
\varphi_i: \mathbb R^m\to\mathbb R^n, 
$$   
such that the $m$-Hausdorff measure $H^m$ of the set $E\setminus \bigcup_{i=1}^{\infty}\varphi_i(\mathbb R^m)$
is zero. 

Let $\Omega\subset\mathbb R^n$ be a domain with $H^{n-1}$-rectifiable boundary $\partial\Omega$ and $w:\partial\Omega\to\mathbb R$ be a non-negative continuous function. We consider the weighted Lebesgue space $L^{p}_w(\partial\Om)$ with the following norm
$$
\|u\|_{L^p_w(\partial\Omega)}=\left(\int_{\partial\Om}|u(x)|^p w(x)\,ds(x)\right)^\frac{1}{p},
$$
where $ds$ is the $(n-1)$-dimensional surface measure on $\partial\Omega$.

In accordance with the outward cuspidal domain $\Omega_{\gamma}$ defined by (\ref{cusp}), we define a continuous weight function $w:\partial\Omega_{\gamma}\to\mathbb R$ setting 
\begin{equation}
\label{weight}
w(x_1,...,x_{n-1},x_n)=
\begin{cases}
\gamma(x_n),\,\,&\text{if}\,\, \sqrt{x_1^2+...+x_{n-1}^2}=\gamma(x_n)<1,
\\
1,\,\,&\text{if}\,\, \sqrt{x_1^2+...+x_{n-1}^2}=\gamma(x_n)\geq 1.
\end{cases}
\end{equation}

The  following theorem is the direct consequence of \cite[Theorem 2.3]{GV} and the fact, that domains of the class $OP_{\varphi}$, which are considered in \cite{GV}, are bi-Lipschitz equivalent to outward cuspidal domains $\Omega_{\gamma}$.

\begin{theorem}
\label{trace}
Let $\Omega_{\gamma}$ be an outward cuspidal domain defined by (\ref{cusp}) and the weight $w$ be defined by (\ref{weight}). Then the trace embedding operator 
$$
i: W^{1,p}(\Omega_{\gamma})\hookrightarrow L^p_w(\partial\Omega_{\gamma})
$$
is compact.
\end{theorem}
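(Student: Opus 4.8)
The plan is to reduce the assertion to the compactness result \cite[Theorem 2.3]{GV} for domains of the class $OP_{\varphi}$ by transporting it through a bi-Lipschitz change of variables. Since $\Omega_{\gamma}$ is bi-Lipschitz equivalent to a domain $\Omega' \in OP_{\varphi}$, I would fix a bi-Lipschitz homeomorphism $\Phi: \Omega' \to \Omega_{\gamma}$ whose restriction to $\partial\Omega'$ is a bi-Lipschitz homeomorphism onto $\partial\Omega_{\gamma}$, and denote by $i': W^{1,p}(\Omega') \hookrightarrow L^p_{\tilde w}(\partial\Omega')$ the compact trace embedding provided by \cite{GV}, where $\tilde w$ is the weight for which that result is stated.

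First I would record the two analytic facts that make a bi-Lipschitz map harmless. A bi-Lipschitz homeomorphism induces, via composition $u \mapsto u\circ\Phi$, a bounded linear isomorphism $C_{\Phi}: W^{1,p}(\Omega_{\gamma}) \to W^{1,p}(\Omega')$; this is classical and follows from the chain rule together with the fact that the derivative matrix $D\Phi$ and its inverse are essentially bounded and the Jacobian is bounded away from $0$ and $\infty$ almost everywhere. Second, the restriction $\Phi|_{\partial\Omega'}$ transforms the $(n-1)$-dimensional surface measure by a tangential Jacobian that is bounded above and below; hence composition with $\Phi|_{\partial\Omega'}$ yields a bounded isomorphism $c_{\Phi}: L^p_w(\partial\Omega_{\gamma}) \to L^p_{\tilde w}(\partial\Omega')$, provided the two weights are matched by the change-of-variables formula.

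The key step is the commutation of trace and composition: for $u \in W^{1,p}(\Omega_{\gamma})$ one has $\operatorname{tr}_{\partial\Omega'}(u\circ\Phi) = (\operatorname{tr}_{\partial\Omega_{\gamma}} u)\circ(\Phi|_{\partial\Omega'})$, so that the diagram relating $i_{\gamma}$ and $i'$ commutes, i.e. $i_{\gamma} = c_{\Phi}^{-1}\circ i'\circ C_{\Phi}$. Since $C_{\Phi}$ and $c_{\Phi}^{-1}$ are bounded and $i'$ is compact by \cite[Theorem 2.3]{GV}, the composition $i_{\gamma}$ is compact, as the pre- and post-composition of a compact operator with bounded operators is compact.

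I expect the main obstacle to be verifying that the weight $w$ defined by (\ref{weight}) is precisely the one that corresponds, under the push-forward of the surface measure by $\Phi$, to the weight $\tilde w$ appearing in \cite{GV}; in other words, checking that the geometric scaling of the cusp built into $w$ matches the tangential Jacobian of the bi-Lipschitz straightening, so that $c_{\Phi}$ is genuinely an isomorphism between the weighted boundary spaces rather than merely a bounded map. Once this matching of weights is confirmed, the remaining ingredients are the standard facts that bi-Lipschitz composition preserves $W^{1,p}$ and that compactness is stable under composition with bounded operators.
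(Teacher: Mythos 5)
Your proposal follows exactly the route the paper takes: the paper justifies Theorem~\ref{trace} in one line as a direct consequence of \cite[Theorem 2.3]{GV} together with the bi-Lipschitz equivalence of $\Omega_{\gamma}$ with a domain of class $OP_{\varphi}$, which is precisely the transfer argument $i_{\gamma}=c_{\Phi}^{-1}\circ i'\circ C_{\Phi}$ you spell out. Your version is simply a more detailed write-up of the same reduction, and you correctly flag the only point that genuinely needs checking, namely that the weight $w$ in (\ref{weight}) is the push-forward of the weight used in \cite{GV}.
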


\section{Linear weighted eigenvalue problem}
\subsection{The linear weighted Steklov eigenvalue problem} Let $\Omega_{\gamma}$ be an outward cuspidal domain defined by (\ref{cusp}) and the weight $w$ be defined by (\ref{weight}). We consider in $\Omega_{\gamma}$ the weighed Steklov linear eigenvalue problems given by \eqref{steq} which reads as,
\begin{equation}
\label{WEn}
\begin{cases}
-\Delta u=0\text{ in }\Omega_{\gamma},\\
\nabla u\cdot\nu=\lambda w u\text{ on }\partial\Omega_{\gamma}.
\end{cases}
\end{equation}

\begin{definition}
\label{def_S}
We say that $(\lambda,u)\in \R\times (W^{1,2}(\Omega_{\gamma})\setminus\{0\})$ is an eigenpair of \eqref{WEn} if for every function $v\in W^{1,2}(\Omega_{\gamma})$, we have
\begin{equation}
\label{skwksol_S}
\begin{split}
\int_{\Omega_{\gamma}}\nabla u\nabla v\,dx=\lambda \int_{\partial \Omega_{\gamma}}u v\,wds(x).
\end{split}
\end{equation}
We refer to $\lambda$ as an eigenvalue and $u$ as an eigenfunction of \eqref{WEn} corresponding to the eigenvalue $\lambda$.
\end{definition}

The main result of this subsection reads as follows:

\begin{theorem}\label{dthm1}
Let $\Omega_{\gamma}$ be an outward cuspidal domain defined by \eqref{cusp}, and let the weight $w$ be defined by \eqref{weight}. Then the spectrum of the weighted Steklov eigenvalue problem \eqref{WEn} is discrete and is given by a non-decreasing sequence
\[
0 = \lambda_0 < \lambda_1 \leq \lambda_2 \leq \dots \leq \lambda_k \leq \dots\,,
\]
where each eigenvalue is repeated according to its finite algebraic multiplicity, and $\lambda_k \to \infty$ as $k \to \infty$.
\end{theorem}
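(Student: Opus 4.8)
The plan is to recast the weak formulation \eqref{skwksol_S} as an eigenvalue equation for a compact self-adjoint operator and then invoke the spectral theorem. First I would equip $H := W^{1,2}(\Omega_\gamma)$ with the bilinear form
\[
\langle u, v\rangle_\ast := \int_{\Omega_\gamma} \nabla u \cdot \nabla v \, dx + \int_{\partial\Omega_\gamma} u v \, w \, ds(x),
\]
and show it is an inner product equivalent to the standard one. Positive definiteness is immediate: $\langle u,u\rangle_\ast = 0$ forces $\nabla u \equiv 0$, so $u$ is constant on the connected domain $\Omega_\gamma$, while $\int_{\partial\Omega_\gamma} u^2 w\, ds = 0$ then makes that constant vanish (since $w>0$ on a set of positive surface measure). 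The substantive point is the reverse bound $\|u\|_{W^{1,2}(\Omega_\gamma)}^2 \le C \langle u, u\rangle_\ast$, equivalently the Poincaré-type estimate
\[
\int_{\Omega_\gamma} u^2\, dx \le C\Bigl(\int_{\Omega_\gamma}|\nabla u|^2\, dx + \int_{\partial\Omega_\gamma} u^2 w\, ds\Bigr),
\]
which I would establish by the usual compactness--contradiction argument, combining the compact trace embedding of Theorem \ref{trace} with the compact interior embedding $W^{1,2}(\Omega_\gamma) \hookrightarrow L^2(\Omega_\gamma)$.

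Second, with $(H, \langle\cdot,\cdot\rangle_\ast)$ now a Hilbert space, I would define $K : H \to H$ through the Riesz representation $\langle Ku, v\rangle_\ast = \int_{\partial\Omega_\gamma} u v\, w\, ds$ for all $v \in H$. Because the boundary form factors through the trace embedding $i$ of Theorem \ref{trace}, which is compact, $K$ is compact, self-adjoint and positive, with $\langle Ku, u\rangle_\ast = \int_{\partial\Omega_\gamma} u^2 w\, ds \le \langle u,u\rangle_\ast$, so $0 \le K \le I$ and $\|K\|\le 1$. Since $\int_{\Omega_\gamma}\nabla u\cdot\nabla v\,dx = \langle u,v\rangle_\ast - \int_{\partial\Omega_\gamma}uvw\,ds$, the eigenpair relation \eqref{skwksol_S} rewrites as $\langle u,v\rangle_\ast = (1+\lambda)\langle Ku, v\rangle_\ast$ for all $v$, i.e. $Ku = (1+\lambda)^{-1} u$. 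Thus $(\lambda,u)$ is a Steklov eigenpair precisely when $\mu = (1+\lambda)^{-1}$ is an eigenvalue of $K$.

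Third, I would apply the spectral theorem for compact self-adjoint operators to $K$, producing a non-increasing sequence of positive eigenvalues $\mu_0 \ge \mu_1 \ge \dots > 0$ of finite multiplicity accumulating only at $0$. Evaluating the Rayleigh quotient $\langle Ku,u\rangle_\ast / \langle u,u\rangle_\ast$ on constants gives $\mu_0 = \|K\| = 1$, and attaining the value $1$ requires $\int_{\Omega_\gamma}|\nabla u|^2\,dx = 0$, so by connectedness of $\Omega_\gamma$ the top eigenvalue $\mu_0 = 1$ is simple. Translating back via $\lambda = \mu^{-1}-1$ reverses the ordering and yields $0 = \lambda_0 < \lambda_1 \le \lambda_2 \le \dots$, with finite multiplicities inherited from $K$ and $\lambda_k \to \infty$ as $\mu_k \to 0^{+}$. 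That there are infinitely many positive $\mu_k$, and hence infinitely many finite $\lambda_k$, follows because the trace image is infinite-dimensional in $L^2_w(\partial\Omega_\gamma)$, so $(\ker K)^{\perp}$ is infinite-dimensional; alternatively one reads it off the min--max characterization of $\lambda_k$.

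The main obstacle is the Poincaré-type inequality behind the norm equivalence in the first step: unlike Theorem \ref{trace}, it relies on the compact \emph{interior} embedding $W^{1,2}(\Omega_\gamma)\hookrightarrow L^2(\Omega_\gamma)$, whose validity for outward cuspidal domains must be justified separately, for instance via the bi-Lipschitz model of the class $OP_\varphi$ or by exploiting the vanishing cross-sectional measure near the cusp tip. Once the operator $K$ is in hand, the remainder is routine spectral theory.
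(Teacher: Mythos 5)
Your argument is correct in substance but packaged differently from the paper's. The paper does not build the compact operator by hand: it places the problem in the abstract framework of Rozenblum--Shubin--Solomyak, introducing the variational triple $(D,a,b)$ with $D=\{u\in W^{1,2}(\Omega_\gamma):\int_{\partial\Omega_\gamma}uw\,ds=0\}$, $a[u]=\int_{\Omega_\gamma}|\nabla u|^2dx$, $b[u]=\int_{\partial\Omega_\gamma}|u|^2w\,ds$, invoking Friedrichs' theorem to obtain a positive self-adjoint operator $S$ with $a[Su,v]=b[u,v]$, and then citing the criterion that $S$ has compact resolvent if and only if the trace embedding $i:W^{1,2}(\Omega_\gamma)\hookrightarrow L^2_w(\partial\Omega_\gamma)$ is compact; Theorem \ref{trace} then yields discreteness. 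You instead renorm $W^{1,2}(\Omega_\gamma)$ by $\langle u,v\rangle_\ast=\int\nabla u\cdot\nabla v\,dx+\int_{\partial\Omega_\gamma}uvw\,ds$, realize the boundary form as a compact self-adjoint $K$ via Riesz representation, and reduce to the spectral theorem through $Ku=(1+\lambda)^{-1}u$. The translation is right (constants give $K1=1$, so $\mu_0=1$ is the simple top eigenvalue corresponding to $\lambda_0=0$, and eigenfunctions cannot lie in $\ker K$), and your approach has the advantage of being self-contained and of making explicit where each hypothesis is used; the paper's approach buys brevity and delivers the min--max characterization of the corollary for free from the cited machinery.

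The one point to watch is the norm-equivalence step, which you correctly single out: the Poincar\'e-type inequality $\int_{\Omega_\gamma}u^2dx\le C\bigl(\int_{\Omega_\gamma}|\nabla u|^2dx+\int_{\partial\Omega_\gamma}u^2w\,ds\bigr)$ obtained by compactness--contradiction requires the compact interior embedding $W^{1,2}(\Omega_\gamma)\hookrightarrow L^2(\Omega_\gamma)$, which Theorem \ref{trace} does not supply. This embedding does hold for outward cuspidal domains (the cusp narrows, and the claim can be checked via the bi-Lipschitz model of the class $OP_\varphi$ or the Maz'ya--Poborchi theory of bad domains), so this is a fillable gap rather than a flaw; note that the paper's own route has the same hidden dependency, since for $(D,a)$ to be a Hilbert space one needs exactly such a Poincar\'e inequality on the subspace of functions with weighted boundary mean zero. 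Your explicit treatment simply brings this requirement into the open.
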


\begin{proof}
The weight function $w\in L^{\infty}(\partial \Omega_{\gamma})$ because it is a continuous function on the compact $\partial \Omega_{\gamma}$. 
Therefore, the weighted Steklov eigenvalue problem \eqref{WEn} corresponds to the variational triple $(D, a, b)$ in the sense of \cite[Section 1.10]{RSS},  where
$$
D=\left\{u\in W^{1,2}(\Omega_{\gamma})\,: \int\limits_{\partial \Omega_{\gamma}}u(x)w(x)~ds(x)=0\right\};
$$
and the quadratic forms are given by
$$
a[u]=\int\limits_{\Omega_{\gamma}}|\nabla u(x)|^2~dx,\,\, b[u]=\int\limits_{\partial \Omega_{\gamma}}|u(x)|^2 w(x)~ds(x).
$$

By Theorem~\ref{trace} the trace embedding operator $i: W^{1,p}(\Omega_{\gamma})\hookrightarrow L^p_w(\partial\Omega_{\gamma})
$ is bounded. The boundedness of the trace operator ensures that the bilinear quadratic form $b[\cdot,\cdot]$ is well-defined on $D$, and thus Friedrich's theorem applies (see \cite[Theorem 1.5]{RSS}) with the target Hilbert space $L^2_w(\partial\Omega_{\gamma})$.
Hence we can define the positive self-adjoint operator  corresponding to the weighted Steklov eigenvalue problem
$$
S:  W^{1,2}(\Omega_{\gamma})\to L^2_w(\partial\Omega_{\gamma})
$$
by the rule \cite[Formula (1.10)]{RSS}
$$
a[Su,v]=b[u,v],
$$
that means
$$
\int\limits_{\Omega_{\gamma}} \nabla (Su(x))\cdot \nabla v~dx=\int\limits_{\partial \Omega_{\gamma}} u(x)v(x)~ w(x)ds(x), \,\,\text{for any}\,\, v\in D,\,\, u\in D.
$$

By \cite[Lemma 5.1]{RSS} the operator $S$ has compact resolvent if and only
if the the trace embedding operator
$$
i: W^{1,2}(\Omega_{\gamma})\hookrightarrow L^2_w(\partial\Omega_{\gamma})
$$
is compact.

Hence by Theorem~\ref{trace} we obtain that  the spectrum of  the weighted eigenvalue problem (\ref{WEn}) is discrete and can be written in the form of a non-decreasing sequence
\[
0=\lambda_0<\lambda_{1}\leq\lambda_{2}\leq...\leq\lambda_{k}\leq...\,,
\]
where each eigenvalue is repeated as many times as its multiplicity. 
\end{proof}

By Theorem~\ref{dthm1} and the spectral theory of self-adjoint linear operators \cite{Dav}, we have also the following properties for the spectrum of the weighted Steklov eigenvalue problem $($\ref{WEn}$)$:

\begin{corollary}
Let $\Omega_{\gamma}$ be an outward cuspidal domain defined by \eqref{cusp}, and let the weight $w$ be defined by \eqref{weight}. Then the spectrum of the weighted Steklov eigenvalue problem \eqref{WEn} has the following properties:

\noindent
(i) the limit
\[
\lim\limits _{k\to\infty}\lambda_{k}=\infty\,,
\]

\noindent
(ii) for each $k\in\mathbb{N}$, the Min-Max Principle
\begin{equation}
\lambda_{n}=\inf\limits _{\substack{L\subset W^{1,2}(\Omega_{\gamma})\\
\dim L=n
}
}\,\,\sup\limits _{\substack{u\in L\\
u\ne 0
}
}\frac{\int_{\Omega_{\gamma}}|\nabla u|^{2}~dx}{\int_{\partial \Omega_{\gamma}}|u|^{2}w~ds}
\label{MinMax}
\end{equation}
holds, and
\begin{equation}
\lambda_{n}=\sup\limits _{\substack{u\in M_{n}\\
u\ne 0
}
}\frac{\int_{\Omega_{\gamma}}|\nabla u|^{2}~dx}{\int_{\partial\Omega_{\gamma}}|u|^{2}w~ds}\label{MaxPr}
\end{equation}
 where
\[
M_{n}={\rm span}\,\{v_{1},v_{2},...v_{n}\}
\]
and $\{v_{k}\}_{k\in\mathbb{N}}$ is an orthonormal (in the space $W^{1,2}(\Omega_{\gamma})$) set of eigenfunctions
corresponding to the eigenvalues $\{\lambda_{k}\}_{k\in\mathbb{N}}$.
\end{corollary}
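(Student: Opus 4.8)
The plan is to derive both statements from the abstract spectral theory of the positive self-adjoint operator $S$ with compact resolvent that was constructed in the proof of Theorem~\ref{dthm1}, combined with the Courant--Fischer (Rayleigh--Ritz) min-max theorem for such operators; see \cite{Dav}. Statement (i) is already implicit in Theorem~\ref{dthm1}: by Theorem~\ref{trace} the trace embedding $i\colon W^{1,2}(\Omega_{\gamma})\hookrightarrow L^2_w(\partial\Omega_{\gamma})$ is compact, so $S$ has compact resolvent, and a positive self-adjoint operator with compact resolvent has purely discrete spectrum whose only possible accumulation point is $+\infty$. Hence the non-decreasing sequence $\{\lambda_k\}$ cannot remain bounded and $\lambda_k\to\infty$.

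For (ii) I would apply the min-max theorem directly to the pair of quadratic forms $a[u]=\int_{\Omega_{\gamma}}|\nabla u|^2\,dx$ and $b[u]=\int_{\partial\Omega_{\gamma}}|u|^2 w\,ds$, equivalently to the operator $S$. The first step is to separate the trivial eigenvalue $\lambda_0=0$, whose eigenspace consists of the constant functions (for which $a[u]=0$), by passing to the subspace $D$ of weighted-boundary-mean-zero functions introduced in the proof of Theorem~\ref{dthm1}; on $D$ the well-posedness and compactness established there make $S$ a genuine compact positive self-adjoint operator, and the generalized eigenvalue equation \eqref{skwksol_S} becomes $Su=\mu u$ with $\mu=\lambda^{-1}$ relating the eigenvalues of $S$ to the positive Steklov eigenvalues $\lambda_1\le\lambda_2\le\dots$. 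The point to record here is that the Rayleigh--Steklov quotient $R(u)=a[u]/b[u]$ is exactly the Rayleigh quotient of this variational problem, so no reduction to a boundary operator is needed.

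With this dictionary, formula \eqref{MinMax} is the Courant--Fischer characterization of $\lambda_n$ transcribed through $\mu=\lambda^{-1}$ (maxima over subspaces for $\mu$ turning into minima over subspaces for $\lambda$), while \eqref{MaxPr} follows by evaluating $R$ on $M_n=\mathrm{span}\{v_1,\dots,v_n\}$ and using the mutual $a$- and $b$-orthogonality of the eigenfunctions, which makes $R|_{M_n}$ a weighted average of $\lambda_1,\dots,\lambda_n$ whose maximum is attained at $v_n$. I expect the main difficulty to be bookkeeping rather than analysis: one must correctly split off the constant eigenspace associated with $\lambda_0=0$, keep the numbering of the eigenfunctions consistent with that of the eigenvalues, and align the dimension count of the competing subspaces $L$ with the indexing of $\{\lambda_k\}_{k\ge 0}$ so that $\dim L=n$ selects precisely $\lambda_n$.
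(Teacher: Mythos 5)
Your proposal is correct and follows essentially the same route as the paper, which gives no separate proof but simply invokes Theorem~\ref{dthm1} together with the spectral theory of self-adjoint operators with compact resolvent \cite{Dav}; your reduction to the operator $S$ from the variational triple $(D,a,b)$, the dictionary $\mu=\lambda^{-1}$, and the Courant--Fischer principle is exactly the argument the paper leaves implicit. The bookkeeping issues you flag (splitting off the constant eigenspace for $\lambda_0=0$ and aligning $\dim L=n$ with the indexing of $\lambda_n$) are real but routine, and the paper does not treat them either.
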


\subsection{The linear weighted Schr\"odinger--Steklov eigenvalue problem}

Let $\Omega_{\gamma}$ be an outward cuspidal domain defined by (\ref{cusp}) and let the weight $w$ be defined by (\ref{weight}). 
We consider the \emph{reduced} linear weighted Schr\"odinger--Steklov eigenvalue problem in $\Omega_{\gamma}$:
\begin{equation}
\label{WeP}
\begin{cases}
-\Delta u + u = 0 & \text{in } \Omega_{\gamma},\\
\nabla u \cdot \nu = \lambda\, w\, u & \text{on } \partial\Omega_{\gamma},
\end{cases}
\end{equation}
together with the orthogonality condition
\begin{equation}
\label{orth}
\int_{\partial\Omega_{\gamma}} u(x)\, w(x)\, ds(x) = 0.
\end{equation}

Next, we define the notion of a weak solution of the problem \eqref{WeP}--\eqref{orth}.

\begin{definition}
\label{defL}
We say that $(\lambda,u)\in \mathbb{R} \times (W^{1,2}(\Omega_{\gamma})\setminus\{0\})$ 
is an eigenpair of \eqref{WeP}--\eqref{orth} if for every function 
$v\in W^{1,2}(\Omega_{\gamma})$ we have
\begin{equation}
\label{skwksol}
\int_{\Omega_{\gamma}} \nabla u \cdot \nabla v\,dx
+ \int_{\Omega_{\gamma}} u\, v\,dx
= \lambda \int_{\partial \Omega_{\gamma}} u\, v\, w\,ds(x),
\end{equation}
and $u$ satisfies the orthogonality condition \eqref{orth}.
\end{definition}
We refer to $\lambda$ as an eigenvalue and $u$ as an eigenfunction of \eqref{WeP}--\eqref{orth} corresponding to the eigenvalue $\lambda$.

The main result of this subsection reads as follows:

\begin{theorem}
Let $\Omega_{\gamma}$ be an outward cuspidal domain defined by \eqref{cusp}, and let the weight $w$ be defined by \eqref{weight}. Then the spectrum of the reduced problem \eqref{WeP}--\eqref{orth} is discrete and is given by a non-decreasing sequence
\[
0 < \lambda_0 \leq \lambda_1 \leq \lambda_2 \leq \dots \leq \lambda_k \leq \dots\,,
\]
where each eigenvalue is repeated according to its finite algebraic multiplicity, and $\lambda_k \to \infty$ as $k \to \infty$.
\end{theorem}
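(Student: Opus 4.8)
The plan is to run the same variational argument as in the proof of Theorem~\ref{dthm1}, but with the Dirichlet form replaced by the full Sobolev inner product; the single genuinely new feature is the strict positivity of the bottom eigenvalue, which I would extract from the trace inequality \eqref{eqn_s}. As in that proof, $w\in L^{\infty}(\partial\Omega_{\gamma})$ since it is continuous on the compact boundary $\partial\Omega_{\gamma}$, so every boundary integral appearing in \eqref{skwksol} is well defined.

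First I would introduce the variational triple $(D,a,b)$ in the sense of \cite[Section~1.10]{RSS}, with form domain dictated by the orthogonality condition \eqref{orth},
\[
D=\left\{u\in W^{1,2}(\Omega_{\gamma}):\int_{\partial\Omega_{\gamma}}u\,w\,ds(x)=0\right\},
\]
and with forms
\[
a[u]=\int_{\Omega_{\gamma}}|\nabla u|^{2}\,dx+\int_{\Omega_{\gamma}}|u|^{2}\,dx=\|u\|_{W^{1,2}(\Omega_{\gamma})}^{2},
\qquad
b[u]=\int_{\partial\Omega_{\gamma}}|u|^{2}\,w\,ds(x).
\]
The decisive structural difference from Theorem~\ref{dthm1} is that here $a[\cdot]$ is precisely the square of the ambient $W^{1,2}$-norm, hence a coercive inner product with trivial kernel, rather than the Dirichlet seminorm that annihilated the constants. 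I would then verify the hypotheses of Friedrichs' theorem \cite[Theorem~1.5]{RSS}: the form $a$ is closed and coercive because it is the Hilbert-space norm itself, while $b$ is well defined and bounded on $D$ because the trace embedding $i:W^{1,2}(\Omega_{\gamma})\hookrightarrow L^{2}_{w}(\partial\Omega_{\gamma})$ is bounded by Theorem~\ref{trace}. This produces the positive self-adjoint operator $S:W^{1,2}(\Omega_{\gamma})\to L^{2}_{w}(\partial\Omega_{\gamma})$ determined by $a[Su,v]=b[u,v]$.

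By \cite[Lemma~5.1]{RSS} the operator $S$ has compact resolvent exactly when $i$ is compact; Theorem~\ref{trace} supplies this compactness, so $S$ has compact resolvent, and the spectral theorem for self-adjoint operators with compact resolvent yields a discrete spectrum given by a non-decreasing sequence of finite-multiplicity eigenvalues with $\lambda_{k}\to\infty$. It then remains to establish the strict inequality $\lambda_{0}>0$, which I expect to be the only point requiring genuine care: unlike the discreteness, which is inherited essentially verbatim from the compactness argument of Theorem~\ref{dthm1}, the separation of the spectrum from zero is a new phenomenon. Here I would invoke the trace inequality \eqref{eqn_s} with $p=2$, which gives $S\,\|u\|_{L^{2}_{w}(\partial\Omega_{\gamma})}^{2}\le\|u\|_{W^{1,2}(\Omega_{\gamma})}^{2}$ for every $u$, so that the Rayleigh quotient $\|u\|_{W^{1,2}(\Omega_{\gamma})}^{2}/\|u\|_{L^{2}_{w}(\partial\Omega_{\gamma})}^{2}$ is bounded below by the positive constant $S$. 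The min-max principle then forces $\lambda_{0}\ge S>0$. In summary, this lower bound rests entirely on the coercivity introduced by the zero-order term $+u$ in \eqref{WeP}, equivalently on the bounded trace inequality \eqref{eqn_s}, which is exactly the ingredient absent in the pure Steklov case.
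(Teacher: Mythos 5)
Your proposal is correct and follows essentially the same route as the paper: the same variational triple $(D,a,b)$ with the coercive form $a[u]=\|u\|_{W^{1,2}(\Omega_{\gamma})}^{2}$, Friedrichs' theorem, and the compact-resolvent criterion driven by Theorem~\ref{trace}. The only difference is that you make explicit the lower bound $\lambda_{0}\geq S>0$ via the trace inequality \eqref{eqn_s}, a point the paper leaves implicit in the coercivity of $a$.
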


\begin{proof}
The weighted Schr\"odinger--Steklov eigenvalue problem \eqref{WeP} corresponds to the variational triple $(D, a, b)$ in the sense of \cite[Section 1.10]{RSS}, where 
$$
D=\left\{u\in W^{1,2}(\Omega_{\gamma})\,: \int\limits_{\partial \Omega_{\gamma}}u(x)w(x)~ds(x)=0\right\};
$$
and the quadratic forms are given by
$$
a[u]=\int\limits_{\Omega_{\gamma}}|\nabla u(x)|^2~dx+\int\limits_{\Omega_{\gamma}}|u(x)|^2~dx,\,\, b[u]=\int\limits_{\partial \Omega_{\gamma}}|u(x)|^2 w(x)~ds(x).
$$

Taking into account Theorem~\ref{trace}, which states the compactness of the trace embedding operator
$$
i: W^{1,2}(\Omega_{\gamma})\hookrightarrow L^2_w(\partial\Omega_{\gamma})
$$ and using \cite[Theorem 1.5]{RSS} as in Theorem \ref{dthm1} above, the result follows.
\end{proof}

\section{Weighted Steklov $p$-eigenvalue problems}

Let $\Omega_{\gamma}$ be an outward cuspidal domain defined by (\ref{cusp}), and let the weight function $w$ be given by (\ref{weight}). 
We consider the \emph{reduced} weighted Schr\"odinger--Steklov $p$-eigenvalue problem, for $1 < p < \infty$:
\begin{equation}
\label{WePN}
\begin{cases}
-\mathrm{div}(|\nabla u|^{p-2}\nabla u) + |u|^{p-2}u = 0 & \text{in } \Omega_{\gamma},\\
|\nabla u|^{p-2} \nabla u \cdot \nu = \lambda\, w\, |u|^{p-2}u & \text{on } \partial\Omega_{\gamma},
\end{cases}
\end{equation}
together with the orthogonality condition
\begin{equation}
\label{orth-p}
\int_{\partial \Omega_{\gamma}} |u|^{p-2} u\, w\, ds = 0.
\end{equation}

\begin{definition}
\label{def-p}
We say that $(\lambda,u)\in \mathbb{R} \times (W^{1,p}(\Omega_{\gamma})\setminus\{0\})$ 
is an eigenpair of \eqref{WePN}--\eqref{orth-p} if for every $v\in W^{1,p}(\Omega_{\gamma})$ we have
\begin{equation}
\label{skwksolN}
\int_{\Omega_{\gamma}} |\nabla u|^{p-2} \nabla u \cdot \nabla v\,dx
+ \int_{\Omega_{\gamma}} |u|^{p-2} u\, v\,dx
= \lambda \int_{\partial \Omega_{\gamma}} |u|^{p-2} u\, v\, w\,ds(x),
\end{equation}
and $u$ satisfies the orthogonality condition \eqref{orth-p}.
\end{definition}
We refer to $\lambda$ as an eigenvalue and $u$ as an eigenfunction of \eqref{WePN}--\eqref{orth-p} corresponding to the eigenvalue $\lambda$.

The equation \eqref{WePN} represents the Euler-Lagrange equation corresponding, in its weak formulation \eqref{skwksolN}, to the functional
$$
F = \|\nabla v\|_{L^p(\Omega_{\gamma})}^p + \|v\|_{L^p(\Omega_{\gamma})}^p,
$$
restricted to the set
$$
S = \left\{ u \in W^{1,p}(\Omega_{\gamma}) : \|u\|_{L^p_w(\partial \Omega_{\gamma})} = 1 \right\}.
$$

The following theorem provides the existence and variational characterization of the first non-trivial eigenvalue \( \lambda_p \) associated with the weighted Schr\"odinger-Steklov \( p \)-eigenvalue problem, described in terms of the minimum of the Rayleigh quotient. The orthogonality condition
$$
\int_{\partial \Omega_{\gamma}} |u|^{p-2} u\, w\, ds = 0
$$
ensures that the eigenfunction is non-trivial and plays a key role in isolating the first non-zero eigenvalue.

\begin{theorem}\label{minthm}
Let $\Omega_{\gamma}$ be an outward cuspidal domain defined by (\ref{cusp}) and the weight $w$ be defined by (\ref{weight}). Then 
for the reduced problem \eqref{WePN}--\eqref{orth-p}, $1<p<\infty$, there exists $u\in W^{1,p}(\Omega_{\gamma})\setminus\{0\}$ satisfying \eqref{orth-p}. Moreover, the first non-trivial eigenvalue $\lambda_{p}$ is given by
\begin{multline}
\label{min}
\lambda_{p}=
\inf \left\{\frac{\|\nabla v\|_{L^p(\Omega_{\gamma})}^p+\|v\|_{L^p(\Omega_{\gamma})}^p}{\|v\|_{L^p_w(\partial \Omega_{\gamma})}^p} : v \in W^{1,p}(\Omega_{\gamma}) \setminus \{0\},
\int_{\partial \Omega_{\gamma}} |v|^{p-2}v w\,ds=0 \right\}\\
=\frac{\|\nabla u\|_{L^p(\Omega_{\gamma})}^p+\|u\|_{L^p(\Omega_{\gamma})}^p}{\|u\|_{L^p_w(\partial \Omega_{\gamma})}^p}.
\end{multline}
\end{theorem}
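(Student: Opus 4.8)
The plan is to establish the existence of a minimizer for the Rayleigh quotient via the direct method of the calculus of variations, and then to verify that any such minimizer satisfies the orthogonality constraint and the weak Euler--Lagrange equation with Lagrange multiplier $\lambda_p$. Throughout, the crucial input is the compactness of the trace embedding $i: W^{1,p}(\Omega_{\gamma})\hookrightarrow L^p_w(\partial\Omega_{\gamma})$ from Theorem~\ref{trace}, together with the reflexivity and uniform convexity of $W^{1,p}(\Omega_{\gamma})$ from Lemma~\ref{Xuthm}.

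First I would observe that the functional is $0$-homogeneous in $v$, so the infimum in \eqref{min} equals the constrained infimum
$$
\lambda_p=\inf\Bigl\{\,\|\nabla v\|_{L^p(\Omega_{\gamma})}^p+\|v\|_{L^p(\Omega_{\gamma})}^p \;:\; v\in W^{1,p}(\Omega_{\gamma}),\ \|v\|_{L^p_w(\partial\Omega_{\gamma})}=1,\ \int_{\partial\Omega_{\gamma}}|v|^{p-2}v\,w\,ds=0\,\Bigr\},
$$
and I would note that this admissible set is nonempty and that $\lambda_p\geq 0$, in fact $\lambda_p>0$ by the trace inequality \eqref{eqn_s}. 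Next I would take a minimizing sequence $(u_k)$ in the admissible set. Since $a[u_k]=\|u_k\|_{W^{1,p}(\Omega_{\gamma})}^p$ is bounded, $(u_k)$ is bounded in $W^{1,p}(\Omega_{\gamma})$; by reflexivity I would extract a subsequence with $u_k\rightharpoonup u$ weakly in $W^{1,p}(\Omega_{\gamma})$. By the compactness of the trace operator, $u_k\to u$ strongly in $L^p_w(\partial\Omega_{\gamma})$, which immediately gives $\|u\|_{L^p_w(\partial\Omega_{\gamma})}=1$ (so $u\neq 0$) and, via continuity of $v\mapsto\int_{\partial\Omega_{\gamma}}|v|^{p-2}v\,w\,ds$ under $L^p_w$-convergence, preserves the orthogonality condition \eqref{orth-p} in the limit. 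By weak lower semicontinuity of the norm $v\mapsto\|v\|_{W^{1,p}(\Omega_{\gamma})}^p$ I would conclude $a[u]\leq\liminf_k a[u_k]=\lambda_p$, while admissibility of $u$ forces $a[u]\geq\lambda_p$; hence $u$ is a minimizer attaining $\lambda_p$.

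The main obstacle will be deriving the weak formulation \eqref{skwksolN} by a constrained variation, because the minimizer is subject to \emph{two} constraints: the normalization $\|u\|_{L^p_w(\partial\Omega_{\gamma})}=1$ and the orthogonality $\int_{\partial\Omega_{\gamma}}|u|^{p-2}u\,w\,ds=0$. I would handle this through a careful Lagrange-multiplier argument: for a test function $v\in W^{1,p}(\Omega_{\gamma})$ I would perturb $u$ along $u+t\varphi$, where $\varphi$ is adjusted so that both constraint functionals are infinitesimally preserved, differentiate the ratio in \eqref{min} at $t=0$, and show the stationarity condition yields \eqref{skwksolN} with eigenvalue exactly $\lambda_p$. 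The differentiability of $t\mapsto\|\nabla(u+t\varphi)\|_{L^p}^p$ and of the boundary functionals is routine for $1<p<\infty$ since the integrands $s\mapsto|s|^p$ and $s\mapsto|s|^{p-2}s$ are $C^1$ away from the origin with controlled growth, so dominated convergence applies; the only delicate point is confirming that the second constraint does not introduce an additional multiplier that survives in the final equation, which follows because the orthogonality functional is the Gateaux derivative structure already encoded in the homogeneity, so that testing against $v=u$ in \eqref{skwksolN} recovers the constraint $\int_{\partial\Omega_{\gamma}}|u|^{p-2}u\cdot u\,w\,ds=\|u\|_{L^p_w}^p$ consistently. Finally I would record that the equality \eqref{min} holds by construction, completing the proof.
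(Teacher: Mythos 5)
Your first step --- the direct method --- is correct and is essentially the same argument the paper gives: take a minimizing sequence in the constrained class, normalize it (the paper normalizes $\|u_k\|_{W^{1,p}(\Omega_{\gamma})}^p=1$ while you normalize the boundary norm, which is immaterial), extract a weakly convergent subsequence, use the compactness of the trace embedding from Theorem~\ref{trace} to get strong convergence in $L^p_w(\partial\Omega_{\gamma})$, pass to the limit in the orthogonality constraint \eqref{orth-p} (the paper does this via dominated convergence with an $L^p_w$ majorant), and conclude by weak lower semicontinuity of the $W^{1,p}$-norm that the limit $u$ is admissible, nonzero, and attains the infimum in \eqref{min}. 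That is all the paper's proof of Theorem~\ref{minthm} establishes, and your version of it is sound.

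Where you diverge is in the second half, where you undertake to derive the weak Euler--Lagrange equation \eqref{skwksolN} for the minimizer. Two comments. First, this is not actually required by the literal statement being proved, and the paper does not attempt it in this proof (the eigenpair property is addressed separately, via Theorem~\ref{sol1} and the inverse-iteration results of Section~5). Second, and more importantly, your treatment of it has a genuine gap: a minimizer subject to the two constraints $\|u\|_{L^p_w(\partial\Omega_{\gamma})}=1$ and $\int_{\partial\Omega_{\gamma}}|u|^{p-2}u\,w\,ds=0$ satisfies, a priori, a stationarity condition with \emph{two} multipliers, of the form
$$
\int_{\Omega_{\gamma}}\bigl(|\nabla u|^{p-2}\nabla u\cdot\nabla v+|u|^{p-2}uv\bigr)dx=\lambda\int_{\partial\Omega_{\gamma}}|u|^{p-2}uv\,w\,ds+\mu\int_{\partial\Omega_{\gamma}}|u|^{p-2}v\,w\,ds,
$$
and one must prove $\mu=0$. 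Your justification --- that ``the orthogonality functional is the Gateaux derivative structure already encoded in the homogeneity, so that testing against $v=u$ recovers the constraint consistently'' --- is not an argument: testing with $v=u$ makes the $\mu$-term vanish \emph{because of} the constraint, so it yields no information about $\mu$. Eliminating the second multiplier typically requires a specific additional test function or structural input (and for the Schr\"odinger--Steklov operator constants are not solutions, so the usual trick of testing with $v\equiv 1$ does not immediately close the argument either). If you intend to claim the eigenpair property, this step needs a real proof; if you restrict yourself to the attainment statement actually asserted in Theorem~\ref{minthm}, your first paragraph already suffices.
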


\begin{proof} 
Note that if the weighted boundary norm $\|v\|_{L^p_w(\partial \Omega_\gamma)}$ vanishes for some admissible function $v$, then the Rayleigh quotient is considered infinite, and such functions do not affect the value of the infimum.

By Theorem~\ref{trace} the trace operator 
$$
i: W^{1,p}(\Omega_{\gamma})\hookrightarrow L^p_w(\partial\Omega_{\gamma})
$$
is well defined, and we can define the functional $G:W^{1,p}(\Omega_{\gamma})\to\mathbb{R}$ by
$$
G(v)=\int_{\partial\Omega_{\gamma}}|v|^{p-2}vw\,ds.
$$
For $k\in\mathbb{N}$, we define
$
H_\frac{1}{k}: W^{1,p}(\Omega_{\gamma})\to\mathbb{R}
$
by
$$
H_{\frac{1}{k}}(v)=\|v\|^{p}_{L^p(\Omega_{\gamma})}+\|\nabla v\|^{p}_{L^p(\Omega_{\gamma})}-\Big(\lambda_p+\frac{1}{k}\Big)\|v\|^{p}_{L^p_{w}(\partial\Omega_{\gamma})}.
$$
By the definition of infimum, which defined in (\ref{min}), for every $k\in\mathbb{N}$, there exists a function $u_k\in W^{1,p}(\Omega_{\gamma})\setminus\{0\}$ such that
$$
\int_{\partial\Omega_{\gamma}}|u_k|^{p-2}u_k w\,dx=0\text{ and }H_{\frac{1}{k}}(u_k)<0.
$$
Without loss of generality, we assume that $\|u_k\|^{p}_{L^p(\Omega_{\gamma})}+\|\nabla u_k\|^{p}_{L^p(\Omega_{\gamma})}=1$. Therefore, the sequence $\{u_k\}_{k\in\mathbb{N}}$ is uniformly bounded in $W^{1,p}(\Omega_{\gamma})$. Hence, by Theorem \ref{trace}, there exists $u\in W^{1,p}(\Omega_{\gamma})$ such that $u_k\rightharpoonup  u$ weakly in $W^{1,p}(\Omega_{\gamma})$, $u_k\to u$ strongly in $L^p_{w}(\partial\Omega_{\gamma})$ and $\nabla u_k\rightharpoonup   \nabla u$ weakly in $L^p(\Omega_{\gamma})$. Moreover, by \cite[Theorem 4.9]{Br10}, there exists $g\in L_{w}^p(\partial\Omega_{\gamma})$ such that $|u_k|\leq g$ for $H^{n-1}$ almost everywhere on $\partial\Omega_{\gamma}$. Hence, by the Lebesgue's dominated convergence theorem, we have
$$
\int_{\partial\Omega_{\gamma}}|u|^{p-2}u w\,ds=\lim_{k\to\infty}\int_{\partial\Omega_{\gamma}}|u_k|^{p-2}u_k w\,ds=0.
$$
Since $H_\frac{1}{k}(u_k)<0$, we have
\begin{equation}\label{lim}
\|u_k\|^{p}_{L^p(\Omega_{\gamma})}+\|\nabla u_k\|^{p}_{L^p(\Omega_{\gamma})}-\Big(\lambda_p+\frac{1}{k}\Big)\|u_k\|^{p}_{L^p_{w}(\partial\Omega_{\gamma})}<0.
\end{equation}
Moreover, since $u_k\rightharpoonup  u$ weakly in $W^{1,p}(\Omega_{\gamma})$ and $\nabla u_k\rightharpoonup  \nabla u$ weakly in $L^p(\Omega_{\gamma})$, we get
\begin{equation}\label{wlsc}
\|u\|^{p}_{L^p(\Omega_{\gamma})}+\|\nabla u\|^{p}_{L^p(\Omega_{\gamma})}\leq\lim\inf_{k\to\infty}\Big(\|u_k\|^{p}_{L^p(\Omega_{\gamma})}+\|\nabla u_k\|^{p}_{L^p(\Omega_{\gamma})}\Big).
\end{equation}
Using \eqref{wlsc} and Theorem \ref{trace} and then passing the limit in (\ref{lim}), we have
$$
\lambda_p\geq \frac{ \|u\|^{p}_{L^p(\Omega_{\gamma})}+\|\nabla u\|^{p}_{L^p(\Omega_{\gamma})} }{\|u\|^p_{L^p_{w}(\partial\Omega_{\gamma})}}.
$$
This combined with the definition of $\lambda_p$, we obtain
$$
\lambda_p=\frac{ \|u\|^{p}_{L^p(\Omega_{\gamma})}+\|\nabla u\|^{p}_{L^p(\Omega_{\gamma})} }{\|u\|^p_{L^p_{w}(\partial\Omega_{\gamma})}}.
$$

Again, since $H_\frac{1}{k}(u_k)<0$ and $\|u_k\|^{p}_{L^p(\Omega_{\gamma})}+\|\nabla u_k\|^{p}_{L^p(\Omega_{\gamma})}=1$, we have
$$
1-\Big(\lambda_p+\frac{1}{k}\Big)\|u_k\|_{L^p_{w}(\partial\Omega_{\gamma})}^p<0.
$$
Letting $k\to\infty$, we get
$$
\|u\|^p_{L^{p}_{w}(\partial\Omega_{\gamma})}\lambda_p\geq 1,
$$
which gives $\lambda_p>0$ and $u\neq 0$ almost everywhere in $\Omega_\gamma$.
\end{proof}

\begin{theorem}
\label{sol1}
Let $\Omega_{\gamma}$ be an outward cuspidal domain defined by (\ref{cusp}) and the weight $w$ be defined by (\ref{weight}). Then 
for the weighted Schr\"odinger--Steklov $p$-eigenvalue problem, $1<p<\infty$, there exists a sequence
of eigenvalues $\{\lambda_k\}_{k\in\mathbb{N}}$ of the problem \eqref{WePN}-\eqref{orth-p} such that $\lambda_k\to+\infty$ as $k\to +\infty$.
\end{theorem}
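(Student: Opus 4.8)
The plan is to produce an infinite sequence of eigenvalues for the nonlinear problem \eqref{WePN}--\eqref{orth-p} using a Lusternik--Schnirelmann (LS) type minimax construction built on the Krasnoselskii genus, exactly in the spirit of the classical nonlinear eigenvalue theory for the $p$-Laplacian. The natural variational setting is to minimize the even functional
\[
\Phi(v)=\|\nabla v\|_{L^p(\Omega_{\gamma})}^p+\|v\|_{L^p(\Omega_{\gamma})}^p
\]
over the constraint manifold $S=\{v\in W^{1,p}(\Omega_{\gamma}):\|v\|_{L^p_w(\partial\Omega_{\gamma})}=1\}$. The map $v\mapsto\|v\|_{L^p_w(\partial\Omega_{\gamma})}^p$ is, by Theorem~\ref{trace}, a weakly continuous functional on $W^{1,p}(\Omega_{\gamma})$, so $S$ is weakly closed; and $\Phi$ is coercive, even, and (by uniform convexity, Lemma~\ref{Xuthm}) sequentially weakly lower semicontinuous. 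Theorem~\ref{minthm} already gives the first level of this scheme.

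First I would define, for each $k\in\mathbb{N}$, the minimax value
\[
\lambda_k=\inf_{A\in\mathcal{F}_k}\ \sup_{v\in A}\Phi(v),
\]
where $\mathcal{F}_k=\{A\subset S: A\text{ symmetric, compact, } \gamma(A)\geq k\}$ and $\gamma$ denotes the Krasnoselskii genus. The evenness of $\Phi$ and of the constraint is what makes the genus available. Standard LS theory then yields that each $\lambda_k$ is a critical value of $\Phi|_S$, hence (via the Lagrange multiplier rule) an eigenvalue of \eqref{WePN}--\eqref{orth-p} in the weak sense of Definition~\ref{def-p}, with the multiplier playing the role of $\lambda$. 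Since $\mathcal{F}_{k+1}\subset\mathcal{F}_k$ we get monotonicity $\lambda_k\leq\lambda_{k+1}$, and I would need to verify that infinitely many distinct values arise and that $\lambda_k\to+\infty$.

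The two technical pillars are compactness and divergence. For compactness, I would verify the Palais--Smale condition for $\Phi|_S$: a sequence $\{u_j\}$ with $\Phi(u_j)$ bounded and constrained gradient tending to zero is bounded in $W^{1,p}(\Omega_{\gamma})$ by coercivity, hence admits a weakly convergent subsequence; the compactness of the trace embedding in Theorem~\ref{trace} upgrades boundary convergence to strong convergence in $L^p_w(\partial\Omega_{\gamma})$, which keeps the limit on $S$, and uniform convexity of $W^{1,p}$ then promotes weak to strong convergence interiorly, exactly as in the argument of Theorem~\ref{minthm}. For divergence, I would argue that if $\lambda_k\to\Lambda<\infty$, the corresponding normalized eigenfunctions would form an infinite set with genus growing without bound yet lying in a region of bounded $\Phi$; using the compactness of the trace operator one extracts a strongly convergent subsequence, contradicting the fact that eigenfunctions for distinct eigenvalues are suitably separated. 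Equivalently, one invokes that on any infinite-dimensional reflexive space the compactness of the embedding $i$ forces the minimax levels of a coercive even functional to tend to infinity.

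The main obstacle I expect is the compactness step for the constrained gradient flow, i.e.\ establishing the Palais--Smale condition rigorously on the constraint manifold $S$ when the boundary weight $w$ degenerates in the cusp. The saving grace is precisely Theorem~\ref{trace}: because the trace embedding into $L^p_w(\partial\Omega_{\gamma})$ is \emph{compact} despite the cuspidal geometry, the boundary term cannot obstruct compactness, and the interior coercivity of $\Phi$ (which includes the full $\|\cdot\|_{W^{1,p}}^p$ norm) prevents mass from escaping. Once Palais--Smale and the genus-based minimax are in place, the existence of the sequence $\{\lambda_k\}$ with $\lambda_k\to+\infty$ follows from the standard Lusternik--Schnirelmann theorem, and the convergence of eigenfunctions asserted in the introduction comes along as a byproduct of the same compactness extraction.
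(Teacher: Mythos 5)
Your proposal is correct and follows essentially the same route as the paper: the paper proves this theorem by invoking \cite[Theorem 1.3]{BR}, whose argument is precisely a Lusternik--Schnirelmann/Krasnoselskii-genus minimax for the even boundary functional, with the compactness of the trace embedding from Theorem~\ref{trace} supplying the Palais--Smale condition and the divergence of the critical levels. The only cosmetic difference is that you minimize $\Phi$ on the set $\|v\|_{L^p_w(\partial\Omega_{\gamma})}=1$ while the paper (following Bonder--Rossi) maximizes the boundary functional on the level set $\|u\|_{W^{1,p}(\Omega_{\gamma})}^p=p\alpha$, a dual formulation of the same scheme.
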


\begin{proof}
Taking into account Theorem \ref{trace}, the proof follows along the lines of the proof of \cite[Theorem 1.3]{BR}. For convenience of the reader, we present few important details below that are crucial to deal with our weighted structure. To this end, as in \cite[page 207-208]{BR}, for any $\alpha>0$, we define the set $S_\alpha=\{u\in W^{1,p}(\Omega_{\gamma}): \|u\|_{W^{1,p}(\Omega_{\gamma})}^p=p\alpha\}$ and
$$
\phi(u)=\frac{1}{p}\int_{\Omega_{\gamma}}|u|^p w\,ds.
$$
Further, we define $\rho:W^{1,p}(\Omega_{\gamma})\setminus\{0\}\to (0,\infty)$ by
$$
\rho(u)=\left(\frac{p\alpha} {\|u\|^p_{ W^{1,p}(\Omega_{\gamma})} }\right)^\frac{1}{p}.
$$

Let $(W^{1,p}(\Omega_{\gamma}))^*$ denote the dual space of $W^{1,p}(\Omega_{\gamma})$ and we define $J:(W^{1,p}(\Omega_{\gamma}))^*\to W^{1,p}(\Omega_{\gamma})$ as the duality mapping such that for any given $\psi\in (W^{1,p}(\Omega_{\gamma}))^*$, there exists a unique element in $W^{1,p}(\Omega_{\gamma})$, say $J(\psi)$ satisfying
$$
\langle \psi,J(\psi)\rangle=\|\psi\|^2_{W^{1,p}(\Omega_{\gamma})^*}
$$
and 
$$
\|J(\psi)\|_{W^{1,p}(\Omega_{\gamma})}=\|\psi\|_{(W^{1,p}(\Omega_{\gamma}))^*}.
$$

Now, we define 
$$
T u=J(Du)-A(u),\,\, u\in W^{1,p}(\Omega_{\gamma}),
$$
where
$$
\langle Du;v\rangle=\int_{\partial\Omega_{\gamma}}|u|^{p-2}uv w\,ds-\langle Pu;v\rangle,
$$

$$
\langle Pu;v\rangle=\frac{\int_{\partial\Omega_{\gamma}}|u|^p w\,ds }{\|u\|^p_{W^{1,p}(\Omega_{\gamma})}}\Big(\int_{\Omega_{\gamma}}(|\nabla u|^{p-2}\nabla u\nabla v+|u|^{p-2}uv)\,dx - \int_{\partial\Omega_{\gamma}}|\nabla u|^{p-2}\frac{\partial u}{\partial\nu}v\,ds\Big),
$$
and
$$
A=\frac{\langle \rho'(u);J(Du)\rangle \langle Pu+Du;u\rangle+\langle Pu; J(Du)\rangle} {\big(\langle \rho'(u);u\rangle+1\big)\langle Pu+Du;u\rangle}.
$$
Now taking into account the above mappings along with Theorem \ref{trace}, the result follows along the lines of the proof of \cite[Theorem 1.3]{BR}.
\end{proof}

\section{Existence results for weighted Steklov $p$-eigenvalue problems by inverse iteration method}

In this section, we establish existence results for the weighted Steklov $p$-eigenvalue problem defined in \eqref{WePN}. We recall that $\Omega_{\gamma}$ is an outward cuspidal domain defined by (\ref{cusp}) and the weight $w$ be defined by (\ref{weight}).

Before stating our main theorems below, we rewrite the definition \eqref{min} of the first non-trivial eigenvalue $\lambda_{p}$ in the following equivalent form:
\begin{equation}\label{la}
\lambda_p:=\inf_{\{u\in W^{1,p}(\Omega_{\gamma})\cap {L^p_{w}(\partial\Omega_{\gamma})},\,\,\|u\|_{L^p_{w}(\partial\Omega_{\gamma})}=1\}}\int_{\Omega_{\gamma}}\Big(|\nabla u|^p+|u|^p\Big)\,dx.
\end{equation}

\begin{theorem}\label{newthm}
Suppose $1 < p < \infty$. Then the following properties hold:
\vskip 0.2cm
\noindent
\textup{(a)} There exists a sequence $\{w_n\}_{n \in \mathbb{N}} \subset W^{1,p}(\Omega_{\gamma}) \cap L^p_{w}(\partial\Omega_{\gamma})$ such that $\|w_n\|_{L^p_{w}(\partial\Omega_{\gamma})} = 1$ for all $n$, and for every $v \in W^{1,p}(\Omega_{\gamma})$, the following identity holds:
\begin{equation}\label{its}
\int_{\Omega_{\gamma}} |\nabla w_{n+1}|^{p-2} \nabla w_{n+1}  \nabla v\, dx + \int_{\Omega_{\gamma}} |w_{n+1}|^{p-2} w_{n+1} v\, dx = \mu_n \int_{\partial\Omega_{\gamma}} |w_n|^{p-2} w_n v w\, ds,
\end{equation}
where
\begin{equation}\label{its1}
\mu_n \geq \lambda_p,
\end{equation}
with $\lambda_p$ defined in \eqref{la}.
\vskip 0.2cm
\noindent
\textup{(b)} The sequences $\{\mu_n\}_{n \in \mathbb{N}}$ and $\{\|w_{n+1}\|_{W^{1,p}(\Omega_{\gamma})}^p\}_{n \in \mathbb{N}}$ are non-increasing and converge to the same limit $\mu \geq \lambda_p$.
\vskip 0.2cm
\noindent
\textup{(c)} There exists a subsequence $\{n_j\}_{j \in \mathbb{N}}$ such that both $\{w_{n_j}\}$ and its forward shift $\{w_{n_{j+1}}\}$ converge strongly in $W^{1,p}(\Omega_{\gamma})$ to the same limit $w \in W^{1,p}(\Omega_{\gamma}) \cap L^p_{w}(\partial\Omega_{\gamma})$, with $\|w\|_{L^p_{w}(\partial\Omega_{\gamma})} = 1$. Moreover, the pair $(\mu, w)$ satisfies the eigenvalue problem \eqref{WePN}.
\end{theorem}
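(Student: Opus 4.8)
The plan is to realize the inverse iteration as the repeated solution of a strictly convex coercive minimization problem on $W^{1,p}(\Omega_{\gamma})$, to extract the monotonicity in part (b) from two well-chosen test functions in the iteration identity \eqref{its}, and finally to pass to the limit along a subsequence using the compactness of the trace (Theorem~\ref{trace}) together with the $(S_+)$ property of the operator defined by
\[
\langle \mathcal{A}u, v\rangle := \int_{\Omega_{\gamma}} |\nabla u|^{p-2}\nabla u\cdot\nabla v\,dx + \int_{\Omega_{\gamma}} |u|^{p-2}u\,v\,dx .
\]
For part (a), I would start from any $w_0$ with $\|w_0\|_{L^p_w(\partial\Omega_{\gamma})}=1$ (a normalized constant works, since $\int_{\partial\Omega_{\gamma}} w\,ds>0$), and given $w_n$ define $\tilde w_{n+1}$ as the unique minimizer of $\Psi(u)=\tfrac1p\|u\|^p_{W^{1,p}(\Omega_{\gamma})}-\int_{\partial\Omega_{\gamma}}|w_n|^{p-2}w_n\,u\,w\,ds$. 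Existence and uniqueness follow by the direct method: the linear term is a bounded functional on $W^{1,p}(\Omega_{\gamma})$ by the boundedness of the trace, hence is dominated by $\tfrac1p\|u\|^p$ so that $\Psi$ is coercive; it is weakly lower semicontinuous (the trace term is even weakly continuous by compactness) and strictly convex by uniform convexity (Lemma~\ref{Xuthm}). Evaluating $\Psi$ at $t w_n$ for small $t>0$ gives $\min\Psi<0=\Psi(0)$, so $\tilde w_{n+1}\ne 0$, and its Euler--Lagrange equation is $\langle\mathcal A\tilde w_{n+1},v\rangle=\int_{\partial\Omega_{\gamma}}|w_n|^{p-2}w_n\,v\,w\,ds$. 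Testing with $v=\tilde w_{n+1}$ forces $\|\tilde w_{n+1}\|_{L^p_w(\partial\Omega_{\gamma})}>0$; normalizing $w_{n+1}:=\tilde w_{n+1}/\|\tilde w_{n+1}\|_{L^p_w(\partial\Omega_{\gamma})}$ and using the $(p-1)$-homogeneity of $\mathcal A$ yields \eqref{its} with $\mu_n=\|\tilde w_{n+1}\|_{L^p_w(\partial\Omega_{\gamma})}^{1-p}$. Testing \eqref{its} with $v=w_{n+1}$ and Hölder's inequality on $\partial\Omega_{\gamma}$ gives $\|w_{n+1}\|^p_{W^{1,p}(\Omega_{\gamma})}\le\mu_n$, while $\|w_{n+1}\|_{L^p_w(\partial\Omega_{\gamma})}=1$ and \eqref{la} give $\|w_{n+1}\|^p_{W^{1,p}(\Omega_{\gamma})}\ge\lambda_p$, proving \eqref{its1}.

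For part (b), write $a_n:=\|w_n\|_{W^{1,p}(\Omega_{\gamma})}$. Taking $v=w_{n+1}$ in \eqref{its} gives $a_{n+1}^p\le\mu_n$ (Hölder on $\partial\Omega_{\gamma}$ and the normalizations), while $v=w_n$ gives $\mu_n=\langle\mathcal A w_{n+1},w_n\rangle\le a_{n+1}^{p-1}a_n$ by the discrete Hölder inequality applied to the two integrals of $\mathcal A$. Combining, $a_{n+1}^p\le a_{n+1}^{p-1}a_n$, so $\{a_n\}$ is non-increasing; it is bounded below by $\lambda_p^{1/p}>0$, hence converges, and I set $\mu$ equal to its $p$-th power limit, with $\mu\ge\lambda_p$. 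The same two bounds give $\mu_{n+1}\le a_{n+2}^{p-1}a_{n+1}\le a_{n+1}^{p}\le\mu_n$, so both $\{\mu_n\}$ and $\{a_{n+1}^p\}$ are non-increasing, and the squeeze $a_{n+1}^p\le\mu_n\le a_{n+1}^{p-1}a_n$ forces them to the common limit $\mu$.

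Part (c) is the heart of the matter. Boundedness of $\{w_n\}$ in $W^{1,p}(\Omega_{\gamma})$ together with reflexivity (Lemma~\ref{Xuthm}) and compactness of the trace (Theorem~\ref{trace}) lets me extract $w_{n_j}\rightharpoonup w$ weakly in $W^{1,p}(\Omega_{\gamma})$ and strongly in $L^p_w(\partial\Omega_{\gamma})$, so $\|w\|_{L^p_w(\partial\Omega_{\gamma})}=1$. To upgrade to strong $W^{1,p}$-convergence I would use \eqref{its} at index $n_j-1$ with $v=w_{n_j}-w$: since $\||w_{n_j-1}|^{p-2}w_{n_j-1}\|_{L^{p'}_w(\partial\Omega_{\gamma})}=1$, the right-hand side is bounded by $\mu_{n_j-1}\|w_{n_j}-w\|_{L^p_w(\partial\Omega_{\gamma})}\to 0$, so $\langle\mathcal A w_{n_j},w_{n_j}-w\rangle\to 0$, and the $(S_+)$ property of $\mathcal A$ gives $w_{n_j}\to w$ strongly, whence $\|w\|^p_{W^{1,p}(\Omega_{\gamma})}=\mu$. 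After a further extraction the forward shift satisfies $w_{n_j+1}\rightharpoonup\tilde w$, and the identical $(S_+)$ argument applied to \eqref{its} at index $n_j$ yields $w_{n_j+1}\to\tilde w$ strongly, with $\|\tilde w\|_{L^p_w(\partial\Omega_{\gamma})}=1$ and $\|\tilde w\|^p_{W^{1,p}(\Omega_{\gamma})}=\mu$. Passing to the limit in \eqref{its} at index $n_j$ (strong $W^{1,p}$-convergence makes $\mathcal A$ continuous, and $|w_{n_j}|^{p-2}w_{n_j}\to|w|^{p-2}w$ in $L^{p'}_w(\partial\Omega_{\gamma})$ by continuity of the Nemytskii map) shows that $\tilde w$ solves $\langle\mathcal A\tilde w,v\rangle=\mu\int_{\partial\Omega_{\gamma}}|w|^{p-2}w\,v\,w\,ds$, i.e. $\tilde w$ minimizes the strictly convex coercive functional $\Phi(u)=\tfrac1p\|u\|^p_{W^{1,p}(\Omega_{\gamma})}-\mu\int_{\partial\Omega_{\gamma}}|w|^{p-2}w\,u\,w\,ds$.

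The final identification is where the main obstacle lies: weak compactness alone does not tell us that the forward shift has the same limit, nor that the limit solves the equation. I would resolve it by a short energy computation. Testing the limiting equation with $\tilde w$ and using $\|\tilde w\|^p_{W^{1,p}(\Omega_{\gamma})}=\mu$ gives $\int_{\partial\Omega_{\gamma}}|w|^{p-2}w\,\tilde w\,w\,ds=1$, so $\Phi(\tilde w)=-\mu\frac{p-1}{p}$; while $\|w\|_{L^p_w(\partial\Omega_{\gamma})}=1$ and $\|w\|^p_{W^{1,p}(\Omega_{\gamma})}=\mu$ give $\Phi(w)=-\mu\frac{p-1}{p}$ as well. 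Thus $w$ attains the minimum of $\Phi$, and by strict convexity the minimizer is unique, forcing $\tilde w=w$. Hence the forward shift converges strongly to the same limit $w$, and $w$ satisfies $\langle\mathcal A w,v\rangle=\mu\int_{\partial\Omega_{\gamma}}|w|^{p-2}w\,v\,w\,ds$ for all $v$, which is exactly the weak formulation \eqref{skwksolN} of \eqref{WePN} with eigenvalue $\mu\ge\lambda_p$. I expect the delicate point to be the combination of the $(S_+)$ property (needed so that the energies converge and $\mathcal A$ is continuous along the sequence, and to identify the forward-shift limit) with the uniqueness of the minimizer of $\Phi$; moreover verifying the $(S_+)$ property itself rests on the monotonicity inequalities for $|\xi|^{p-2}\xi$, whose quantitative forms must be handled separately in the ranges $p\ge 2$ and $1<p<2$.
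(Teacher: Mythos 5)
Your proposal is correct, and for parts (a) and (b) it coincides in substance with the paper's argument: the same iteration identity, the same test functions $v=w_{n+1}$ and $v=w_n$, and the same two H\"older-type bounds (the paper's $(H_3)$--$(H_4)$ of Lemma \ref{auxlmab}) yielding $\lambda_p\le \|w_{n+1}\|^p_{W^{1,p}(\Omega_\gamma)}\le\mu_n\le \|w_{n+1}\|^{p-1}_{W^{1,p}(\Omega_\gamma)}\|w_n\|_{W^{1,p}(\Omega_\gamma)}$ and the squeeze to a common limit. The differences are in the technical devices. For the solvability of each iteration step the paper invokes the Minty--Browder surjectivity theorem (Theorem \ref{MB} via property $(H_5)$), whereas you minimize the strictly convex coercive functional $\Psi$; both are standard and your route additionally gives uniqueness of $\tilde w_{n+1}$ for free. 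For part (c) the paper defers entirely to \cite[Lemma 5]{Ercole}; you supply a self-contained argument, and it is here that your route genuinely diverges. Ercole's scheme upgrades weak to strong convergence and identifies the forward-shift limit through the \emph{equality cases} of the inequalities $(H_3)$ and $(H_4)$ (which is precisely why the paper proves those equality characterizations), concluding $\tilde w=tw$ and then $t=1$ from the normalizations. You instead use the $(S_+)$ property of the operator $\mathcal A$ to obtain strong $W^{1,p}$-convergence of both $\{w_{n_j}\}$ and $\{w_{n_j+1}\}$, and then identify $\tilde w=w$ by showing both are minimizers of the strictly convex functional $\Phi$; the energy computation $\Phi(w)=\Phi(\tilde w)=-\mu\frac{p-1}{p}$ is a nice touch and is correct. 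What your approach buys is a cleaner limit identification that avoids the equality-case analysis; what it costs is the verification of $(S_+)$, which you rightly flag as requiring the quantitative monotonicity inequalities for $|\xi|^{p-2}\xi$ separately for $p\ge2$ and $1<p<2$ (the Damascelli inequality quoted in Lemma \ref{newlem} covers the delicate range, so this is a genuine but standard piece of work rather than a gap). Two very minor points: the linear boundary term in $\Psi$ is weakly continuous simply because it is a bounded linear functional, no compactness needed; and the positivity $\lambda_p>0$ that you use to bound $a_n$ from below follows from the boundedness of the trace embedding in Theorem \ref{trace}.
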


\begin{theorem}
\label{subopthm1}
Let $1 < p < \infty$. Suppose $\{u_n\}_{n \in \mathbb{N}} \subset W^{1,p}(\Omega_{\gamma}) \cap L^p_{w}(\partial\Omega_{\gamma})$ is a sequence such that $\|u_n\|_{L^p_{w}(\partial\Omega_{\gamma})} = 1$ for all $n$, and
\[
\lim_{n \to \infty} \|u_n\|_{W^{1,p}(\Omega_{\gamma})}^p = \lambda_p,
\]
where $\lambda_p$ is defined in \eqref{la}.
\vskip 0.2cm
\noindent
\textup{(a)} Then there exists a subsequence $\{u_{n_j}\}_{j \in \mathbb{N}}$ that converges strongly in $W^{1,p}(\Omega_{\gamma})$ to a function $u \in W^{1,p}(\Omega_{\gamma}) \cap L^p_{w}(\partial\Omega_{\gamma})$ with $\|u\|_{L^p_{w}(\partial\Omega_{\gamma})} = 1$, and
\[
\lambda_p = \int_{\Omega_{\gamma}} |\nabla u|^p\, dx + \int_{\Omega_{\gamma}} |u|^p\, dx.
\]
\vskip 0.2cm
\noindent
\textup{(b)} Moreover, $(\lambda_p, u)$ is an eigenpair of \eqref{WePN}, and every eigenfunction associated with $\lambda_p$ is a scalar multiple of such limit functions at which $\lambda_p$ is attained.
\end{theorem}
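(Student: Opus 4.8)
The plan is to run the direct method of the calculus of variations on the constrained minimization problem \eqref{la}, with the two essential ingredients supplied by the excerpt: the compactness of the trace embedding (Theorem~\ref{trace}) and the uniform convexity of $W^{1,p}(\Omega_{\gamma})$ (Lemma~\ref{Xuthm}). Note first that, by the definition of the Sobolev norm, $\int_{\Omega_{\gamma}}(|\nabla v|^p+|v|^p)\,dx=\|v\|_{W^{1,p}(\Omega_{\gamma})}^p$, so that $\lambda_p$ is the infimum of $\|v\|_{W^{1,p}(\Omega_{\gamma})}^p$ over the constraint set $\{\|v\|_{L^p_w(\partial\Omega_{\gamma})}=1\}$.

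For part (a), since $\|u_n\|_{W^{1,p}(\Omega_{\gamma})}^p\to\lambda_p$ the sequence $\{u_n\}$ is bounded in $W^{1,p}(\Omega_{\gamma})$; by reflexivity I extract a subsequence $u_{n_j}\rightharpoonup u$ weakly in $W^{1,p}(\Omega_{\gamma})$. Compactness of the trace (Theorem~\ref{trace}) upgrades this to $u_{n_j}\to u$ strongly in $L^p_w(\partial\Omega_{\gamma})$, so that $\|u\|_{L^p_w(\partial\Omega_{\gamma})}=1$; in particular $u$ is admissible and $u\neq 0$. Weak lower semicontinuity of the norm gives $\|u\|_{W^{1,p}(\Omega_{\gamma})}^p\le\liminf_j\|u_{n_j}\|_{W^{1,p}(\Omega_{\gamma})}^p=\lambda_p$, while admissibility forces the reverse inequality by the definition of $\lambda_p$; hence $\|u\|_{W^{1,p}(\Omega_{\gamma})}^p=\lambda_p$ and $u$ attains the infimum. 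Finally, from $\|u_{n_j}\|_{W^{1,p}(\Omega_{\gamma})}\to\|u\|_{W^{1,p}(\Omega_{\gamma})}$ together with $u_{n_j}\rightharpoonup u$ and the uniform convexity of $W^{1,p}(\Omega_{\gamma})$, the Radon--Riesz property yields $u_{n_j}\to u$ strongly in $W^{1,p}(\Omega_{\gamma})$, completing (a).

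For part (b), I characterize the minimizer through a Lagrange multiplier. The functionals $F(v)=\|v\|_{W^{1,p}(\Omega_{\gamma})}^p$ and $G(v)=\|v\|_{L^p_w(\partial\Omega_{\gamma})}^p$ are $C^1$ on $W^{1,p}(\Omega_{\gamma})$ (the latter being well-defined and continuous by Theorem~\ref{trace}), with
$$
\langle F'(u),v\rangle=p\int_{\Omega_{\gamma}}\big(|\nabla u|^{p-2}\nabla u\cdot\nabla v+|u|^{p-2}u\,v\big)\,dx,\qquad \langle G'(u),v\rangle=p\int_{\partial\Omega_{\gamma}}|u|^{p-2}u\,v\,w\,ds.
$$
Since $\langle G'(u),u\rangle=p\|u\|_{L^p_w(\partial\Omega_{\gamma})}^p=p\neq0$, the level set $\{G=1\}$ is regular at $u$, so there is a multiplier $\Lambda$ with $F'(u)=\Lambda\,G'(u)$; this is precisely the weak formulation \eqref{skwksolN}. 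Testing with $v=u$ gives $\|u\|_{W^{1,p}(\Omega_{\gamma})}^p=\Lambda\|u\|_{L^p_w(\partial\Omega_{\gamma})}^p$, hence $\Lambda=\lambda_p$, and $(\lambda_p,u)$ is an eigenpair. Conversely, if $\tilde u$ is any eigenfunction for $\lambda_p$, testing \eqref{skwksolN} with $v=\tilde u$ yields $\|\tilde u\|_{W^{1,p}(\Omega_{\gamma})}^p=\lambda_p\|\tilde u\|_{L^p_w(\partial\Omega_{\gamma})}^p$ with $\|\tilde u\|_{L^p_w(\partial\Omega_{\gamma})}>0$, so the normalization $u^\ast=\tilde u/\|\tilde u\|_{L^p_w(\partial\Omega_{\gamma})}$ attains $\lambda_p$ and is therefore a limit function (it is the limit of the constant minimizing sequence $u_n\equiv u^\ast$). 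Thus $\tilde u$ is a scalar multiple of such a limit function.

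The step I expect to be the main obstacle is the passage to the limit in the constraint. By weak lower semicontinuity alone one would only obtain $\|u\|_{L^p_w(\partial\Omega_{\gamma})}\le 1$, which leaves open a loss of boundary mass and even the possibility $u=0$; it is exactly the compactness in Theorem~\ref{trace} that rules this out and secures $\|u\|_{L^p_w(\partial\Omega_{\gamma})}=1$. The two remaining technical points are routine: the weak-to-strong upgrade rests on combining convergence of norms with uniform convexity (Lemma~\ref{Xuthm}), and the Lagrange multiplier rule in (b) is justified by the regularity of the constraint, i.e. $\langle G'(u),u\rangle\neq0$.
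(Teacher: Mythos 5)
Your part (a) is correct and is essentially the paper's own argument: boundedness of the minimizing sequence, weak convergence by reflexivity (Lemma~\ref{Xuthm}), strong convergence of traces by the compact embedding (Theorem~\ref{trace}) to preserve the constraint $\|u\|_{L^p_w(\partial\Omega_{\gamma})}=1$, the two-sided inequality identifying $\|u\|_{W^{1,p}(\Omega_{\gamma})}^p=\lambda_p$, and the Radon--Riesz upgrade to strong convergence via uniform convexity; your closing remark that the compactness of the trace is precisely what prevents loss of the boundary constraint is also the crux the paper relies on. For part (b) you diverge: the paper does not give an explicit argument but invokes Lemma~\ref{auxlmab} (the operators $A$, $B$ and the equality cases in the H\"older-type bounds $(H_3)$, $(H_4)$) and defers to Proposition~2 of Ercole's inverse-iteration framework, whereas you derive the Euler--Lagrange equation directly by the Ljusternik--Lagrange multiplier rule for the $C^1$ functionals $F(v)=\|v\|_{W^{1,p}(\Omega_{\gamma})}^p$ and $G(v)=\|v\|_{L^p_w(\partial\Omega_{\gamma})}^p$, with the constraint qualification $\langle G'(u),u\rangle=p\neq 0$, and then identify the multiplier as $\lambda_p$ by testing with $u$. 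This is a valid and more self-contained route; it buys independence from the abstract operator machinery at the (mild) cost of having to verify Fr\'echet differentiability of $F$ and $G$ (standard for $1<p<\infty$ via continuity of the Nemytskii map $s\mapsto|s|^{p-2}s$ from $L^p$ to $L^{p'}$ and boundedness of the trace). Your converse step --- that any eigenfunction $\tilde u$ for $\lambda_p$ has nonvanishing boundary norm (else testing with $\tilde u$ forces $\tilde u=0$) and hence normalizes to a minimizer, realized trivially as the limit of a constant minimizing sequence --- establishes the ``scalar multiple'' clause exactly as stated, again bypassing the equality analysis in $(H_3)$--$(H_4)$ that the paper's cited route would use.
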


\subsection{Auxiliary results}
In this subsection, we prove some auxiliary results that are needed to prove Theorem \ref{newthm} and Theorem \ref{subopthm1} above. These results mainly follow by using the inverse iteration method introduced in \cite{Ercole}.  We begin by stating the following result from \cite[Theorem $9.14$]{var}:
\begin{theorem}\label{MB}
Let $V$ be a real separable reflexive Banach space and $V^*$ be the dual of $V$. Assume that $A:V\to V^{*}$ is a bounded, continuous, coercive and monotone operator. Then $A$ is surjective, i.e., given any $f\in V^{*}$, there exists $u\in V$ such that $A(u)=f$. If $A$ is strictly monotone, then $A$ is also injective. 
\end{theorem}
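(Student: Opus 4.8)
The plan is to establish surjectivity by a Galerkin (finite-dimensional approximation) argument, the standard route to the Minty--Browder theorem. Since $V$ is separable, I would first fix a sequence $\{e_j\}_{j\in\mathbb{N}}$ whose finite linear combinations are dense in $V$, and set $V_n=\operatorname{span}\{e_1,\dots,e_n\}$, so that $\bigcup_n V_n$ is dense in $V$. Fix $f\in V^*$. For each $n$ the plan is to solve the finite-dimensional problem: find $u_n\in V_n$ with
\[
\langle A(u_n),v\rangle=\langle f,v\rangle\qquad\text{for all }v\in V_n.
\]
Choosing an inner product on $V_n$ via the basis identification with $\mathbb{R}^n$, this amounts to finding a zero of a continuous field $\Phi_n:V_n\to V_n$ defined by $(\Phi_n(w),v)=\langle A(w)-f,v\rangle$; I would obtain such a zero from the elementary consequence of Brouwer's fixed point theorem asserting that a continuous field pointing outward on a large sphere must vanish inside. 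Coercivity supplies exactly the needed sign condition, since $\langle A(\xi),\xi\rangle\ge C(R)\|\xi\|$ with $C(R)\to\infty$ forces $\langle A(\xi),\xi\rangle>\langle f,\xi\rangle$ on $\|\xi\|=R$ for $R$ large.

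Second, I would extract limits. Testing with $v=u_n$ gives $\langle A(u_n),u_n\rangle=\langle f,u_n\rangle\le\|f\|_{V^*}\|u_n\|_V$, and coercivity then forces $\sup_n\|u_n\|_V<\infty$. Since $A$ is bounded, $\{A(u_n)\}$ is bounded in $V^*$. By reflexivity of $V$ (and hence of $V^*$), after passing to a subsequence I may assume $u_n\rightharpoonup u$ in $V$ and $A(u_n)\rightharpoonup\chi$ in $V^*$. For fixed $m$ and $v\in V_m$ the Galerkin identity holds for all $n\ge m$; letting $n\to\infty$ and using density of $\bigcup_m V_m$ yields $\langle\chi,v\rangle=\langle f,v\rangle$ for all $v\in V$, that is, $\chi=f$.

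Third---and this is the crux---I would identify $\chi$ with $A(u)$ via Minty's trick. From $v=u_n$ one has $\langle A(u_n),u_n\rangle=\langle f,u_n\rangle\to\langle f,u\rangle=\langle\chi,u\rangle$. Monotonicity gives, for every $w\in V$,
\[
\langle A(u_n)-A(w),\,u_n-w\rangle\ge 0,
\]
and passing to the limit (using the computed limit of $\langle A(u_n),u_n\rangle$ together with $A(u_n)\rightharpoonup\chi$ and $u_n\rightharpoonup u$) yields $\langle\chi-A(w),u-w\rangle\ge 0$ for all $w\in V$. Choosing $w=u-tz$ with $t>0$ and arbitrary $z\in V$, dividing by $t$, and letting $t\to 0^+$ using the continuity of $A$, I obtain $\langle\chi-A(u),z\rangle\ge 0$; replacing $z$ by $-z$ forces equality, so $\chi=A(u)$ and therefore $A(u)=f$, proving surjectivity. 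Injectivity under strict monotonicity is then immediate: if $A(u_1)=A(u_2)$ then $\langle A(u_1)-A(u_2),u_1-u_2\rangle=0$, which by strict monotonicity forces $u_1=u_2$.

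The main obstacle I anticipate is the passage to the limit in the monotonicity inequality, namely controlling $\langle A(u_n),u_n\rangle$ in the limit, since $u_n\rightharpoonup u$ and $A(u_n)\rightharpoonup\chi$ give only weak convergence in both factors and the pairing of two weakly convergent sequences is not continuous in general. The resolution is that the offending term is computed exactly here, because $\langle A(u_n),u_n\rangle=\langle f,u_n\rangle$ is linear in the single weakly convergent sequence $u_n$; securing this identity---which rests on $u_n\in V_n$ and on testing the Galerkin equation against $u_n$ itself---is precisely what makes Minty's argument close.
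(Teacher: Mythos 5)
Your proof is correct: the Galerkin approximation on the dense increasing subspaces, the a priori bound from coercivity, the identification $\chi=f$ by testing against each $V_m$, and Minty's trick (where the critical term $\langle A(u_n),u_n\rangle=\langle f,u_n\rangle$ converges because it is linear in the single weakly convergent sequence) all close as you describe, and the assumed full continuity of $A$ more than suffices for the hemicontinuity step $A(u-tz)\to A(u)$. Note that the paper itself gives no proof of this statement --- it is recalled verbatim from Ciarlet's book as the Browder--Minty surjectivity theorem --- and your argument is exactly the standard proof of that cited result.
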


First, we provide the preliminaries related to the functional properties of operators defined by the problem \eqref{WePN}.
We define the operators $A : W^{1,p}(\Omega_{\gamma}) \to \left(W^{1,p}(\Omega_{\gamma})\right)^*$ by
\begin{equation}\label{a}
\begin{split}
\langle A(f),v\rangle&=\int_{\Omega_{\gamma}}|\nabla f|^{p-2}\nabla f\nabla v\,dx+\int_{\Omega_{\gamma}}|f|^{p-2}fv\,dx\quad \forall v\,\in W^{1,p}(\Omega_{\gamma})
\end{split}
\end{equation}
and $B:L^p_{w}(\partial\Omega_{\gamma})\to (L^p_{w}(\partial\Omega_{\gamma}))^*$ by
\begin{equation}\label{b}
\begin{split}
\langle B(f),v\rangle=\int_{\partial\Omega_{\gamma}}|f|^{p-2}fv\,w\,ds,\quad \forall v\,\in L^p_{w}(\partial\Omega_{\gamma}).
\end{split}
\end{equation}
The symbols ${(W^{1,p}(\Omega_{\gamma}))}^*$ and $(L^p_{w}(\partial\Omega_{\gamma}))^*$ denotes the dual of $W^{1,p}(\Omega_{\gamma})$ and $L^p_{w}(\partial\Omega_{\gamma})$ respectively. First, we have the following result.
\begin{lemma}\label{newlem}
$(i)$ The operators $A$ defined by \eqref{a} and $B$ defined by \eqref{b} are continuous. $(ii)$ Moreover, $A$ is bounded, coercive and monotone.
\end{lemma}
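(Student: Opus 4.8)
The plan is to check the five assertions one at a time directly from the definitions \eqref{a} and \eqref{b}, starting with the structural properties of $A$ that require no limiting argument. For boundedness, I would fix $f\in W^{1,p}(\Omega_{\gamma})$ and an arbitrary test function $v$, apply Hölder's inequality with exponents $p'=p/(p-1)$ and $p$ to each of the two integrands, and use the identities $\||\nabla f|^{p-2}\nabla f\|_{L^{p'}}=\|\nabla f\|_{L^p}^{p-1}$ and $\||f|^{p-2}f\|_{L^{p'}}=\|f\|_{L^p}^{p-1}$; combining the two resulting terms by the discrete Hölder inequality yields
\[
|\langle A(f),v\rangle|\le \|f\|_{W^{1,p}(\Omega_{\gamma})}^{\,p-1}\,\|v\|_{W^{1,p}(\Omega_{\gamma})},
\]
so that $A(f)$ is a bounded functional with $\|A(f)\|_{*}\le\|f\|_{W^{1,p}(\Omega_{\gamma})}^{p-1}$, and $A$ maps bounded sets into bounded sets. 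For coercivity, I would simply test $A(f)$ against $f$ itself, obtaining
\[
\langle A(f),f\rangle=\|\nabla f\|_{L^p(\Omega_{\gamma})}^p+\|f\|_{L^p(\Omega_{\gamma})}^p=\|f\|_{W^{1,p}(\Omega_{\gamma})}^p,
\]
whence $\langle A(f),f\rangle/\|f\|_{W^{1,p}(\Omega_{\gamma})}=\|f\|_{W^{1,p}(\Omega_{\gamma})}^{p-1}\to\infty$ as $\|f\|_{W^{1,p}(\Omega_{\gamma})}\to\infty$, since $p>1$.

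For monotonicity, the decisive input is the classical pointwise vector inequality
\[
\bigl(|\xi|^{p-2}\xi-|\eta|^{p-2}\eta\bigr)\cdot(\xi-\eta)\ge 0\qquad\text{for all }\xi,\eta\in\mathbb{R}^m,\ 1<p<\infty.
\]
Applying it with $m=n$ to the gradient fields $\nabla f,\nabla g$ and with $m=1$ to the values of $f,g$, and then integrating over $\Omega_{\gamma}$, immediately gives $\langle A(f)-A(g),\,f-g\rangle\ge 0$.

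The genuine work, and the step I expect to be the main obstacle, is the continuity of $A$ (and, by the same mechanism, of $B$). The strategy is to recognize the continuity as that of a Nemytskii operator: the map $\Phi(\xi)=|\xi|^{p-2}\xi$ induces a continuous operator from $L^p$ into $L^{p'}$, and likewise for the scalar map on $L^p$. Concretely, if $f_n\to f$ in $W^{1,p}(\Omega_{\gamma})$, then $\nabla f_n\to\nabla f$ in $L^p(\Omega_{\gamma})$ and $f_n\to f$ in $L^p(\Omega_{\gamma})$, and since Hölder's inequality bounds $\|A(f_n)-A(f)\|_{*}$ by $\||\nabla f_n|^{p-2}\nabla f_n-|\nabla f|^{p-2}\nabla f\|_{L^{p'}}+\||f_n|^{p-2}f_n-|f|^{p-2}f\|_{L^{p'}}$, it suffices to show each of these tends to $0$. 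I would establish this by the subsequence principle: passing to an arbitrary subsequence, invoke \cite[Theorem 4.9]{Br10} to extract a further subsequence converging a.e.\ and dominated by a fixed $L^p$ function; continuity of $\Phi$ then gives a.e.\ convergence of the integrand, while the domination supplies an $L^1$ majorant for $\bigl||\nabla f_{n_k}|^{p-2}\nabla f_{n_k}-|\nabla f|^{p-2}\nabla f\bigr|^{p'}$, so Lebesgue's dominated convergence theorem forces convergence to $0$ along that subsequence. Because every subsequence admits a further subsequence converging to $0$, the whole sequence converges, proving continuity of $A$. The continuity of $B$ follows identically after writing $d\mu=w\,ds$, identifying $L^p_w(\partial\Omega_{\gamma})=L^p(d\mu)$ with dual $L^{p'}(d\mu)$ and noting $\|B(f)\|_{*}=\|f\|_{L^p_w(\partial\Omega_{\gamma})}^{p-1}$; the only point needing care is that the domination argument of \cite[Theorem 4.9]{Br10} is now applied on the measure space $(\partial\Omega_{\gamma},d\mu)$ rather than on $\Omega_{\gamma}$.
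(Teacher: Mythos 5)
Your proof is correct, and for boundedness, coercivity and monotonicity it coincides with the paper's argument: H\"older's inequality for the dual bound $\|A(f)\|_{*}\le\|f\|_{W^{1,p}(\Omega_{\gamma})}^{p-1}$, testing with $f$ itself for coercivity, and the pointwise monotonicity of $\xi\mapsto|\xi|^{p-2}\xi$ for monotonicity (the paper quotes the quantitative version of that inequality from Damascelli but only uses its nonnegativity, exactly as you do). The genuine divergence is in the continuity step. The paper takes $f_n\to f$ in $W^{1,p}(\Omega_{\gamma})$, extracts an a.e.\ convergent subsequence, records the uniform bound $\||\nabla f_n|^{p-2}\nabla f_n\|_{L^{p'}(\Omega_{\gamma})}\le c$, and concludes only that $|\nabla f_n|^{p-2}\nabla f_n\rightharpoonup|\nabla f|^{p-2}\nabla f$ weakly in $L^{p'}(\Omega_{\gamma})$ (similarly for the zero-order term and for $B$); this yields $\langle A(f_n),v\rangle\to\langle A(f),v\rangle$ for each fixed $v$, i.e.\ demicontinuity (strong-to-weak continuity), which is all that the Minty--Browder surjectivity theorem invoked later actually requires. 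You instead prove genuine norm continuity of $A$ into the dual, bounding $\|A(f_n)-A(f)\|_{*}$ by the $L^{p'}$ distances of the Nemytskii images and sending these to zero via the subsequence principle, the dominated-subsequence extraction of \cite[Theorem 4.9]{Br10}, and Lebesgue's dominated convergence theorem; the same scheme on the measure $w\,ds$ handles $B$. Your version is strictly stronger and arguably matches the literal wording of the lemma better than the paper's; the price is the extra domination and subsequence bookkeeping, which the paper sidesteps by settling for weak convergence of the images.
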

\begin{proof}
\noindent
$(i)$ \textbf{Continuity:} Suppose $f_n\in W^{1,p}(\Omega_{\gamma})$ such that $f_n\to f$ in the norm of $W^{1,p}(\Omega_{\gamma})$. Thus, up to a subsequence $\nabla f_{n}(x)\to \nabla f(x)$ for almost every $x\in\Omega_{\gamma}$. We observe for $p'=\frac{p}{p-1}$ that
\begin{equation}\label{mfd}
\||\nabla f_n|^{p-2}\nabla f_n\|_{L^{p'}(\Omega_{\gamma})}\leq {c\|\nabla f_{n}\|^{p-1}_{L^p(\Omega_{\gamma})}}\leq c, 
\end{equation}
for some constant $c>0$, which is independent of $n$. Thus, up to a subsequence, we have
\begin{equation}\label{fc}
|\nabla f_{n}|^{p-2}\nabla f_n\rightharpoonup |\nabla f|^{p-2}\nabla f\text{ weakly in }L^{p'}(\Omega_{\gamma}).
\end{equation}
Similarly, we get
\begin{equation}\label{fcnew}
|f_{n}|^{p-2}f_n\rightharpoonup |f|^{p-2}f\text{ weakly in }L^{p'}(\Omega_{\gamma}).
\end{equation}
Thus $A$ is continuous.\\
To prove the continuity of $B$, let $\{f_n\}_{n\in\mathbb{N}}\in L^p_{w}(\partial\Omega_{\gamma})$ converges strongly to $f\in L^p_{w}(\partial\Omega_{\gamma})$. Thus, up to a subsequence $f_n\to f$ for almost every $x\in\Omega_{\gamma}$. We observe that
\begin{equation}\label{cntb}
\||f_n|^{p-2}f_nw^\frac{1}{p'}|\|_{L^{p'}(\partial\Omega_{\gamma})}=\Big(\int_{\partial\Omega_{\gamma}}|f_n|^{p}w\,ds\Big)^\frac{p-1}{p}\leq c,
\end{equation}
for some positive constant $c$ independent of $n$. Hence,
\begin{equation}\label{fcntb}
|f_{n}|^{p-2}f_n\,w^\frac{1}{p'}\rightharpoonup |f|^{p-2}f\,w^\frac{1}{p'}\text{ weakly in }L^{p'}(\Omega_{\gamma}).
\end{equation}
Let $v\in L^p_{w}(\partial\Omega_{\gamma})$. Then $w^\frac{1}{p}v\in L^p(\partial\Omega_{\gamma})$. Therefore, we have
$$
\lim_{n\to\infty}\langle B(f_n),v\rangle=\lim_{n\to\infty}\int_{\partial\Omega_{\gamma}}|f_n|^{p-2}f_nv\,w\,ds=\int_{\partial\Omega_{\gamma}}|f|^{p-2}fv\,w\,ds,
$$
which proves that $B$ is continuous.

\vskip 0.2cm
\noindent
$(ii)$ \textbf{Boundedness:}
First using the Cauchy-Schwarz inequality and then by H\"older's inequality with exponents $p'$ and $p$, for every $f,v\in W^{1,p}(\Omega_{\gamma})$, we obtain
\begin{multline}\label{stest}
\langle A(f),v\rangle\\
=\int_{\Omega_{\gamma}}|\nabla f|^{p-2}\nabla f\nabla v\,dx+\int_{\Omega_{\gamma}}|f|^{p-2}fv\,dx
\leq\int_{\Omega_{\gamma}}|\nabla f|^{p-1}|\nabla v|\,dx+\int_{\Omega_{\gamma}}|f|^{p-1}v\,dx\\
\leq\Big(\int_{\Omega_{\gamma}}|\nabla f|^p\,dx\Big)^\frac{p-1}{p}\Big(\int_{\Omega_{\gamma}}|\nabla v|^p\,dx\Big)^\frac{1}{p}+\Big(\int_{\Omega_{\gamma}}|f|^p\,dx\Big)^\frac{p-1}{p}\Big(\int_{\Omega_{\gamma}}|v|^p\,dx\Big)^\frac{1}{p}\\
\leq\Bigg[\Big(\int_{\Omega_{\gamma}}|\nabla f|^p\,dx\Big)^\frac{p-1}{p}+\Big(\int_{\Omega_{\gamma}}|f|^p\,dx\Big)^\frac{p-1}{p}\Bigg]\|v\|_{W^{1,p}(\Omega_{\gamma})}\\
\leq \Big(\int_{\Omega_{\gamma}}|\nabla f|^p\,dx+\int_{\Omega_{\gamma}}|f|^p\,dx\Big)^\frac{p-1}{p}\|v\|_{W^{1,p}(\Omega_{\gamma})}
=\|f\|_{W^{1,p}(\Omega_{\gamma})}^{p-1}\|v\|_{W^{1,p}(\Omega_{\gamma})}.
\end{multline}
Therefore, we have
$$
\|A(f)\|_{{W^{1,p}(\Omega_{\gamma})}^*}=\sup_{\|v\|_{W^{1,p}(\Omega_{\gamma})}\leq 1}|\langle Af,v\rangle|\leq\|f\|_{W^{1,p}(\Omega_{\gamma})}^{p-1}\|v\|_{W^{1,p}(\Omega_{\gamma})}\leq\|f\|^{p-1}_{W^{1,p}(\Omega_{\gamma})}.
$$
Thus, $A$ is bounded.

\noindent
\textbf{Coercivity:}  We observe that for every $f\in W^{1,p}(\Omega_{\gamma})$,
$$
\langle A(f),f\rangle=\int_{\Omega_{\gamma}}|\nabla f|^p\,dx+\int_{\Omega_{\gamma}}|f|^p\,dx=\|f\|_{W^{1,p}(\Omega_{\gamma})}^p.
$$
Since $p>1$, we have $A$ is coercive.\\

\noindent
\textbf{Monotonicity:}
First, we recall the algebraic inequality from \cite[Lemma 2.1]{Dama}: there exists a constant $C=C(p)>0$ such that
$$
\langle |a|^{p-2}a-|b|^{p-2}b,a-b\rangle\geq C(|a|+|b|)^{p-2}|a-b|^2,
$$
for every $a,b\in\mathbb{R}^N$. Using the above inequality, for every $f,g\in W^{1,p}(\Omega_{\gamma})$, we have
\begin{equation*}
\begin{split}
\langle A(f)-A(g),f-g\rangle&=\int_{\Omega_{\gamma}}\langle |\nabla f|^{p-2}\nabla f-|\nabla g|^{p-2}\nabla g,\nabla (f-g)\rangle\,dx\\
&\quad+\int_{\Omega_{\gamma}}\big(|f|^{p-2}f-|g|^{p-2}g,(f-g)\big)\,dx\geq 0.
\end{split}
\end{equation*}
Thus, $A$ is monotone.
\end{proof}

\begin{lemma}\label{auxlmab}
The operators $A$ defined by \eqref{a} and $B$ defined by \eqref{b} satisfy the following properties: 
\vskip 0.2cm

\noindent
$(H_1)$ $A(tv)=|t|^{p-2}tA(v)\quad\forall t\in\mathbb{R}\quad \text{and}\quad\forall v\in W^{1,p}(\Omega_{\gamma})$.
\vskip 0.2cm

\noindent
$(H_2)$ $B(tv)=|t|^{p-2}tB(v)\quad\forall t\in\mathbb{R}\quad \text{and}\quad\forall v\in L^p_{w}(\partial\Omega_{\gamma})$.
\vskip 0.2cm

\noindent
$(H_3)$ $\langle A(f),v\rangle\leq\|f\|_{W^{1,p}(\Omega_{\gamma})}^{p-1}\|v\|_{W^{1,p}(\Omega_{\gamma})}$ for all $f,v\in W^{1,p}(\Omega_{\gamma})$, where the equality holds if and only if $f=0$ or $v=0$ or $f=t v$ for some $t>0$.
\vskip 0.2cm

\noindent
$(H_4)$ $\langle B(f),v\rangle\leq\|f\|_{L^p_{w}(\partial\Omega_{\gamma})}^{p-1}\|v\|_{L^p_{w}(\partial\Omega_{\gamma})}$ for all $f,v\in {L^p_{w}(\partial\Omega_{\gamma})}$, where the equality holds if and only if $f=0$ or $v=0$ or $f=tv$ for some $t\geq 0$.
\vskip 0.2cm

\noindent
$(H_5)$ For every $f\in L^p_{w}(\partial\Omega_{\gamma})\setminus\{0\}$ there exists $u\in W^{1,p}(\Omega_{\gamma})\setminus\{0\}$ such that
$$
\langle A(u),v\rangle=\langle B(f),v\rangle\quad\forall\quad v\in W^{1,p}(\Omega_{\gamma}).
$$
\end{lemma}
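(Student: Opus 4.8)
The plan is to check each of the five properties $(H_1)$–$(H_5)$ separately, since they are of quite different natures: $(H_1)$–$(H_2)$ are elementary homogeneity identities, $(H_3)$–$(H_4)$ are sharp Hölder-type inequalities with equality analysis, and $(H_5)$ is an existence statement that will be reduced to the surjectivity theorem (Theorem \ref{MB}) via Lemma \ref{newlem}. I expect $(H_5)$ to be the substantive step, while the others are routine once the right inequalities are invoked.

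For $(H_1)$ and $(H_2)$, I would simply substitute $tv$ into the definitions \eqref{a} and \eqref{b} and factor out the scalar. For $A$, replacing $f$ by $tv$ turns $|\nabla f|^{p-2}\nabla f$ into $|t|^{p-2}t\,|\nabla v|^{p-2}\nabla v$ (and similarly for the zero-order term), since $|t\nabla v|^{p-2}(t\nabla v) = |t|^{p-2}t\,|\nabla v|^{p-2}\nabla v$; pulling the common factor $|t|^{p-2}t$ out of both integrals gives the claim, and $(H_2)$ is identical with the boundary integrand. For the inequalities $(H_3)$ and $(H_4)$, the non-strict bounds are exactly the estimate \eqref{stest} already established in the boundedness part of Lemma \ref{newlem} (and its boundary analogue obtained from a single application of Hölder with exponents $p'$ and $p$ to $\langle B(f),v\rangle$). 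The only new content is the equality case: here I would trace through the two inequalities used — the pointwise Cauchy–Schwarz step $\nabla f\cdot\nabla v\le|\nabla f|\,|\nabla v|$ and the integral Hölder step — and record that equality in Hölder forces $|\nabla f|^p$ and $|\nabla v|^p$ to be proportional a.e., while equality in Cauchy–Schwarz forces $\nabla f$ and $\nabla v$ to be positively aligned; combining with the zero-order term yields $f=tv$ for some $t>0$ (respectively $t\ge 0$ in the boundary case, where the constant functions are allowed). The degenerate possibilities $f=0$ or $v=0$ account for the remaining equality cases.

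The heart of the lemma is $(H_5)$. Fix $f\in L^p_w(\partial\Omega_\gamma)\setminus\{0\}$ and consider the fixed element $B(f)\in (L^p_w(\partial\Omega_\gamma))^*$. The plan is to view the right-hand side as a bounded linear functional on $W^{1,p}(\Omega_\gamma)$ via the trace embedding: by Theorem \ref{trace} the operator $i:W^{1,p}(\Omega_\gamma)\hookrightarrow L^p_w(\partial\Omega_\gamma)$ is compact, hence bounded, so the composition $v\mapsto\langle B(f),iv\rangle = \int_{\partial\Omega_\gamma}|f|^{p-2}f\,(iv)\,w\,ds$ defines an element of $(W^{1,p}(\Omega_\gamma))^*$; call it $\psi$. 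By Lemma \ref{newlem}, the operator $A:W^{1,p}(\Omega_\gamma)\to (W^{1,p}(\Omega_\gamma))^*$ is bounded, continuous, coercive, and monotone, and $W^{1,p}(\Omega_\gamma)$ is separable and reflexive by Lemma \ref{Xuthm}. Theorem \ref{MB} then furnishes $u\in W^{1,p}(\Omega_\gamma)$ with $A(u)=\psi$, i.e. $\langle A(u),v\rangle=\langle B(f),v\rangle$ for all $v\in W^{1,p}(\Omega_\gamma)$, which is precisely the desired identity.

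It remains to argue that the solution $u$ is nonzero, which is where I expect the only real subtlety. If $u=0$ then $A(u)=0$, forcing $\psi=0$, i.e. $\int_{\partial\Omega_\gamma}|f|^{p-2}f\,v\,w\,ds=0$ for every $v\in W^{1,p}(\Omega_\gamma)$. Testing against functions whose traces are dense in $L^p_w(\partial\Omega_\gamma)$ — in particular against (approximations of) $v=f$ itself, giving $\int_{\partial\Omega_\gamma}|f|^p w\,ds=\|f\|_{L^p_w(\partial\Omega_\gamma)}^p=0$ — contradicts $f\ne 0$. Hence $u\ne 0$, completing the verification. The one point that must be handled carefully is that the traces of $W^{1,p}(\Omega_\gamma)$ functions form a sufficiently rich subset of $L^p_w(\partial\Omega_\gamma)$ to separate $|f|^{p-2}f\,w$ from zero; since smooth functions restrict to a dense class of boundary traces, this is available, and I would phrase the nonvanishing argument through this density rather than by testing against $f$ directly if the trace of an $L^p_w$ function is not itself realized by an element of $W^{1,p}(\Omega_\gamma)$.
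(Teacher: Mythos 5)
Your proposal is correct and follows essentially the same route as the paper: homogeneity by direct substitution for $(H_1)$--$(H_2)$, the Hölder/Cauchy--Schwarz chain with equality analysis for $(H_3)$--$(H_4)$, and for $(H_5)$ the composition of $B(f)$ with the trace embedding to obtain an element of $\left(W^{1,p}(\Omega_{\gamma})\right)^*$ followed by the surjectivity of the monotone, coercive operator $A$ via Theorem \ref{MB}. Your explicit argument that the solution $u$ is nonzero (via density of traces in $L^p_w(\partial\Omega_\gamma)$) is a point the paper leaves implicit, and is a welcome addition rather than a deviation.
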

\begin{proof}
\vskip 0.2cm

\noindent
(H1) and (H2) follow directly from the definitions of $A$ and $B$, respectively.
\vskip 0.2cm

\noindent
$(H_3)$ Let $f,v\in W^{1,p}(\Omega_{\gamma})$. Then the inequality $\langle Af,v\rangle\leq \|f\|_{W^{1,p}(\Omega_{\gamma})}^{p-1}\|v\|_{W^{1,p}(\Omega_{\gamma})}$ follows from the proof of boundedness of $A$ in Lemma \ref{newlem} above.\\
If the equality
\begin{equation}\label{stequal}
\langle A(f),v\rangle=\|f\|_{W^{1,p}(\Omega_{\gamma})}^{p-1}\|v\|_{W^{1,p}(\Omega_{\gamma})}
\end{equation}
holds for every $f,v\in W^{1,p}(\Omega_{\gamma})$, we claim that either $f=0$ or $v=0$ or $f=tv$ for some constant $t>0$. Indeed, if $f=0$ or $v=0$, this is trivial. Therefore, we assume $f\neq 0$ and $v\neq 0$ and prove that $f=tv$ for some constant $t>0$.  We observe that if the equality \eqref{stequal} holds, then by the estimate \eqref{stest} we have
\begin{equation}\label{stequal2}
\begin{split}
f_1-f_2=g_2-g_1,
\end{split}
\end{equation}
where
$$
f_1=\int_{\Omega_{\gamma}}|\nabla f|^{p-1}|\nabla v|\,dx,\quad f_2=\Big(\int_{\Omega_{\gamma}}|\nabla f|^p\,dx\Big)^\frac{p-1}{p}\Big(\int_{\Omega_{\gamma}}|\nabla v|^p\,dx\Big)^\frac{1}{p},
$$
$$
g_1=\int_{\Omega_{\gamma}}|f|^{p-1}|v|\,dx,\quad g_2=\Big(\int_{\Omega_{\gamma}}|f|^p\,dx\Big)^\frac{p-1}{p}\Big(\int_{\Omega_{\gamma}}|v|^p\,dx\Big)^\frac{1}{p}.
$$
By H\"older's inequality, we know that $f_1-f_2\leq 0$ and $g_2-g_1\geq 0$. Therefore, we obtain from \eqref{stequal2} that
$$
f_1=f_2\text{ and }g_1=g_2.
$$
Since $g_1=g_2$, the equality in H\"older's inequality holds, which gives
\begin{equation}\label{stCSeq2}
|f(x)|=t|v(x)|\text{ in }\Omega_\gamma,
\end{equation}
for some constant $t>0$.\\
Again, by the estimate \eqref{stest} if the equality \eqref{stequal} holds, then we have
\begin{equation}\label{stequal1}
\langle A(f), v\rangle=\int_{\Omega_{\gamma}}|\nabla f|^{p-1}|\nabla v|\,dx+\int_{\Omega_{\gamma}}|f|^{p-1}|v|\,dx,
\end{equation}
which gives us
\begin{equation}\label{stCS}
\int_{\Omega_{\gamma}}F(x)\,dx+\int_{\Omega_{\gamma}}G(x)\,dx=0,
\end{equation}
where 
$$
F=|\nabla f|^{p-1}|\nabla v|-|\nabla f|^{p-2}\nabla f\nabla v
$$
and
$$
G=|f|^{p-1}|v|-|f|^{p-2}fv.
$$
By Cauchy-Schwarz inequality, we have $F\geq 0$ in $\Omega_{\gamma}$ and $G\geq 0$ in $\Omega_{\gamma}$. Hence using these facts in \eqref{stCS}, we have $G=0$ in $\Omega_{\gamma}$, which reduces to
\begin{equation}\label{stfCS}
f(x)=c(x)v(x)\text{ in }\Omega_{\gamma},
\end{equation}
for some $c(x)\geq 0$ in $\Omega_{\gamma}$.

Combining \eqref{stCSeq2} and \eqref{stfCS}, we get $c(x)=t$ for $x\in\Omega_{\gamma}$ and therefore, we obtain $v=tw$ in $\Omega_{\gamma}$, for some constant $t>0$. Hence, the property $(H_3)$ is verified. 

\vskip 0.2cm

\noindent
$(H_4)$ Let $f,v\in L^p_{w}(\partial\Omega_{\gamma})$. Then first using Cauchy-Schwarz inequality and  then by H\"older's inequality with exponents $p'$ and $p$, we obtain
\begin{multline}
\label{bb}
    \langle B(f),v\rangle=\int_{\partial\Omega_{\gamma}}|f|^{p-2}f\,v\,w\,ds
    \leq \int_{\partial\Omega_{\gamma}}|f|^{p-1}|\,|v|w\,ds\\
    \leq \Big(\int_{\partial\Omega_{\gamma}}|f|^p\,w\,ds\Big)^\frac{p-1}{p}\Big(\int_{\partial\Omega_{\gamma}}|v|^p\,w\,ds\Big)^\frac{1}{p}
    =\|f\|_{L^p_{w}(\partial\Omega_{\gamma})}^{p-1}\|v\|_{L^p_{w}(\partial\Omega_{\gamma})}.
\end{multline}

If the equality 
\begin{equation}\label{bbequal}
\langle B(f),v\rangle=\|f\|_{L^p_{w}(\partial\Omega_{\gamma})}^{p-1}\|v\|_{L^p_{w}(\partial\Omega_{\gamma})}
\end{equation}
holds for every $f,v\in L^p_{w}(\partial\Omega_{\gamma})$, we claim that either $f=0$ or $v=0$ or $f=tv$ for some constant $t\geq 0$. Indeed, if $f=0$ or $v=0$, this is trivial. Therefore, we assume $f\neq 0$ and $v\neq 0$ and prove that $f=tv$ for some constant $t\geq 0$.  We observe that if the equality \eqref{bbequal} holds, then by the estimate \eqref{bb} above, we have
\begin{equation}\label{bbequal2}
\begin{split}
\int_{\partial\Omega_{\gamma}}|f|^{p-1}|\,|v|w\,ds=\Big(\int_{\partial\Omega_{\gamma}}|f|^p\,w\,ds\Big)^\frac{p-1}{p}\Big(\int_{\partial\Omega_{\gamma}}|v|^p\,w\,ds\Big)^\frac{1}{p}.
\end{split}
\end{equation}
This means equality in H\"older's inequality holds, which gives 
\begin{equation}\label{bbCSeq2}
|f(x)|=t|v(x)|\text{ on }{\partial\Omega_{\gamma}},
\end{equation}
for some constant $t>0$.\\
Again, by the estimate \eqref{bb} if the equality \eqref{bbequal} holds, then we have
\begin{equation}\label{bbequal1}
\langle B(f), v\rangle=\int_{\partial\Omega_{\gamma}}|f|^{p-1}\,|v|\,w\,ds.
\end{equation}
Hence, the equality in Cauchy-Schwarz inequality holds, which gives us
\begin{equation}\label{bbfCS}
f(x)=c(x)v(x)\text{ on }{\partial\Omega_{\gamma}},
\end{equation}
for some $c(x)\geq 0$ on ${\partial\Omega_{\gamma}}$.

Combining \eqref{bbCSeq2} and \eqref{bbfCS}, we get $c(x)=t$ for $x\in{\partial\Omega_{\gamma}}$ and therefore, we obtain $v=tw$ in ${\partial\Omega_{\gamma}}$, for some constant $t>0$. Hence, the property $(H_4)$ is verified.
\vskip 0.2cm

\noindent
$(H_5)$ Note that by Lemma \ref{Xuthm}, it follows that $W^{1,p}(\Omega_{\gamma})$ is a separable and reflexive Banach space. By Lemma \ref{newlem}, the operator $A:W^{1,p}(\Omega_{\gamma})\to ({W^{1,p}(\Omega_{\gamma})})^*$ is bounded, continuous, coercive and monotone.

By Theorem \ref{trace}, the Sobolev space $W^{1,p}(\Omega_{\gamma})$ is continuously embedded in $L^p_{w}(\partial\Omega_{\gamma})$. Therefore, $B(f)\in ({W^{1,p}(\Omega_{\gamma})})^*$ for every $f\in L^p_{w}(\partial\Omega_{\gamma})\setminus\{0\}$.

Hence, taking into account Lemma \ref{Xuthm}, by Theorem \ref{MB}, for every $f\in L^p_{w}(\partial\Omega_{\gamma})\setminus\{0\}$, there exists $u\in W^{1,p}(\Omega_{\gamma})\setminus\{0\}$ such that
$$
\langle A(u),v\rangle=\langle B(f),v\rangle\quad\forall v\in W^{1,p}(\Omega_{\gamma}).
$$
Hence the property $(H_5)$ holds. This completes the proof.
\end{proof}

\vskip 0.2cm

\textbf{Proof of Theorem \ref{newthm}:}
We begin by recalling the definitions of the operators 
$A : W^{1,p}(\Omega_{\gamma}) \to \left(W^{1,p}(\Omega_{\gamma})\right)^*$ from \eqref{a}, and 
$B : L^p_{w}(\partial\Omega_{\gamma}) \to \left(L^{p}_{w}(\partial\Omega_{\gamma})\right)^*$ from \eqref{b}. 

The proof of part (a) follows by proceeding along the lines of the argument in \cite[pages 579 and 584--585]{Ercole}. 
The proofs of parts (b) and (c) follow similarly from \cite[Lemmas 4 and 5]{Ercole}, respectively. 

For the reader's convenience, we briefly outline the proof of part (c) below.

\vskip 0.2cm
\noindent
$(a)$
We fix $w_0\in L^p_{w}(\partial\Omega_\gamma)$ such that $\|w_0\|_{L^p_{w}(\partial\Omega_\gamma)}=1$. Then by the property $(H_5)$ of Lemma \ref{auxlmab}, it follows that there exists $u_1\in W^{1,p}(\Omega_\gamma)\setminus\{0\}$ such that
$$
\langle A(u_1), v\rangle=\langle B(w_0), v\rangle\quad \forall v\in W^{1,p}(\Omega_\gamma).
$$
We set $w_1=\|u_1\|_{L^p_{w}(\partial\Omega_\gamma)}^{-1}u_1$ and $\mu_1=(\|u_1\|_{L^p_{w}(\partial\Omega_\gamma)})^{1-p}$. By $(H_1)$ and $(H_2)$, multiplying the above equation by $(\|u_1\|_{L^p_{w}(\partial\Omega_\gamma)})^{1-p}$, we obtain
\[
\langle A(w_1), v\rangle = \mu_1\langle B(w_0), v\rangle \quad \forall v \in W^{1,p}(\Omega_\gamma).
\]
Now repeating the above argument, we construct the iterative sequence $\{w_n\}_{n\in\mathbb{N}}\subset W^{1,p}(\Omega_{\gamma})\cap L^p_{w}(\partial\Omega_{\gamma})$ such that \eqref{its} holds, where
$$
\mu_n=(\|u_{n+1}\|_{L^p_{w}(\partial\Omega_\gamma)})^{1-p}
$$
satisfies \eqref{its1}. Indeed, by the definition of $\lambda_p$, we observe that
\begin{multline}
\label{its3}
\lambda_p\leq \|w_{n+1}\|^p_{W^{1,p}(\Omega_\gamma)}\\
\underbrace{=}_{\text{by the definition of $A$}}\langle A(w_{n+1}), w_{n+1}\rangle
\underbrace{=}_{\text{by choosing } v=w_{n+1} \text{ in } \eqref{its}}\mu_n\langle B (w_n), w_{n+1}\rangle\\
\underbrace{\leq}_{\text{by } (H_4) \text{ of Lemma \ref{auxlmab}}}\mu_n \|w_n\|^{p-1}_{L^p_w(\partial\Omega_\gamma)}\|w_{n+1}\|_{L^p_w(\partial\Omega_\gamma)}
\underbrace{=}_{\text{since }\|w_n\|_{L^p_{w}(\partial\Omega_\gamma)}=1}\mu_n,
\end{multline}
where the last equality above follows due to the fact that $\|w_j\|_{L^p_w(\partial\Omega_\gamma)}=1 \text{ for } j=n,n+1$.
\vskip 0.2cm
\noindent
$(b)$ We observe that
\begin{multline}
\label{its4}
\mu_n \underbrace{=}_{\text{since }\|w_n\|_{L^p_{w}(\partial\Omega_\gamma)}=1}\mu_n \|w_n\|^q_{L^p_{w}(\partial\Omega_\gamma)}
\underbrace{=}_{\text{by the definition of $B$}}\mu_n \langle B(w_n), w_n\rangle\\
\underbrace{=}_{\text{by choosing } v=w_n \text{ in } \eqref{its}}\langle A(w_{n+1}),
w_n\rangle
\underbrace{=}_{\text{by } (H_3) \text{ of Lemma \ref{auxlmab}}} \|w_{n+1}\|^{p-1}_{W^{1,p}(\Omega_\gamma)}\|w_n\|_{W^{1,p}(\Omega_\gamma)}\\
\underbrace{\leq}_{\text{by }\eqref{its3}}\mu_{n}^\frac{p-1}{p}\mu_{n-1}^\frac{1}{p}.
\end{multline}
Therefore, the above inequalities along with \eqref{its3} gives
$$
\|w_{n+1}\|_{W^{1,p}(\Omega_\gamma)}\leq \|w_n\|_{W^{1,p}(\Omega_\gamma)}\text{ and }\mu_n\leq\mu_{n-1}.
$$
Combining the above facts with 
$$
\mu_n\geq \|w_{n+1}\|^p_{W^{1,p}(\Omega_\gamma)}\geq \lambda_p,
$$
which follows by \eqref{its3}, we obtain that the numerical sequences $\{\mu_n\}_{n\in\mathbb{N}}$ and $\{\|w_{n+1}\|^p_{W^{1,p}(\Omega_\gamma)}\}_{n\in\mathbb{N}}$ are convergent. 
Passing to the limit in \eqref{its4} as $n\to\infty$, the sequences  $\{\mu_n\}_{n\in\mathbb{N}}$ and $\{\|w_{n+1}\|^p_{W^{1,p}(\Omega_\gamma)}\}_{n\in\mathbb{N}}$ converges to the same limit, which we denote by $\mu$. Moreover, $\mu\geq\lambda_p$ follows from \eqref{its3}.
\vskip 0.2cm
\noindent
$(c)$ Taking into account Lemma \ref{Xuthm}, Theorem \ref{trace}, Lemma \ref{newlem} along with  Lemma \ref{auxlmab} the result follows proceeding the lines of the proof of \cite[Lemma 5]{Ercole}.

\textbf{Proof of Theorem \ref{subopthm1}:}
The proofs for both part \( (a) \) and \( (b) \) follow exactly as in the proof of \cite[Proposition 2]{Ercole}.
For convenience of the reader, we give the proof below with a brief sketch for part $(b)$.
\vskip 0.2cm
\noindent
$(a)$
By Lemma \ref{Xuthm}, $W^{1,p}(\Omega_\gamma)$ is reflexive and by Theorem \ref{trace}, it is compactly embedded in $L^p_{w}(\partial\Omega_\gamma)$. Therefore, since $\{u_n\}_{n\in\mathbb{N}}$ is bounded in $W^{1,p}(\Omega_\gamma)$, there exists a subsequence $\{u_{n_j}\}_{j\in\mathbb{N}}$ and $u\in W^{1,p}(\Omega_\gamma)\cap L^p_{w}(\partial\Omega_\gamma)$ such that $u_{n_j}\rightharpoonup u$ weakly in $W^{1,p}(\Omega_\gamma)$ and $u_{n_j}\rightarrow u$ strongly in $L^p_{w}(\partial\Omega_\gamma)$. Therefore, by the above strong convergence, we have $\|u\|_{L^p_{w}(\partial\Omega_\gamma)}=\lim_{j\to\infty}\|u_{n_j}\|_{L^p_{w}(\partial\Omega_\gamma)}=1$. Moreover, the above weak convergence gives us
$$
\|u\|_{W^{1,p}(\Omega_\gamma)}\leq \lim_{j\to\infty}\|u_{n_j}\|_{W^{1,p}(\Omega_\gamma)}=\lambda_p^\frac{1}{p}\leq \|u\|_{W^{1,p}(\Omega_\gamma)},
$$
where the last inequality above follows by the definition of $\lambda_p$ from \eqref{la}. Therefore, $\lambda_p=\|u\|^p_{W^{1,p}(\Omega_\gamma)}$. The above inequalities also gives that $\lim_{j\to\infty}\|u_{n_j}\|_{W^{1,p}(\Omega_\gamma)}=\|u\|_{W^{1,p}(\Omega_\gamma)}$. Hence, from Lemma \ref{Xuthm}, by the uniform convexity of $W^{1,p}(\Omega_\gamma)$, we obtain that $u_{n_j}\rightarrow u$ strongly in $W^{1,p}(\Omega_\gamma)$.
\vskip 0.2cm
\noindent
$(b)$ Taking into account Lemma \ref{auxlmab} and proceeding along the lines of the proof of \cite[Proposition 2]{Ercole}, the result follows. 

\vskip 0.4cm

\noindent
{\bf Acknowledgments.} 

\vskip 0.2cm
\noindent
The authors thank Iosif Polterovich for very useful and valuable remarks.

\vskip 0.2cm
\noindent
The authors thank the anonymous reviewer for carefully reading the paper and providing valuable comments.

\vskip 0.2cm
\noindent
The first author is supported by the seed grant: IISERBPR/RD/OO/2024/15, Date:
February 08, 2024, from IISER Berhampur.

\vskip 0.2cm
\noindent
\textbf{Data availability statements.}
\noindent
Data sharing not applicable to this article as no datasets were generated or analysed during the current study.

\vskip 0.3cm

\noindent {\textsf{Prashanta Garain\\
Department of Mathematical Sciences,\\
Indian Institute of Science Education and Research Berhampur\\
Permanent Campus, At/Po:-Laudigam,\\
Dist.-Ganjam, Odisha, India-760003}\\
\textsf{e-mail}: pgarain92@gmail.com\\

\vskip 0.1cm

\noindent {\textsf{Vladimir Gol'dshtein\\
Department of Mathematics,\\
Ben-Gurion University of the Negev,\\ P.O.Box 653, Beer Sheva, 8410501, Israel}\\
\textsf{e-mail}: vladimir@math.bgu.ac.il\\

\vskip 0.1cm

\noindent {\textsf{Alexander Ukhlov\\
Department of Mathematics,\\
Ben-Gurion University of the Negev,\\ P.O.Box 653, Beer Sheva, 8410501, Israel}\\
\textsf{e-mail}: ukhlov@math.bgu.ac.il\\


\begin{thebibliography}{10}

\bibitem{AA} W.~Arendt, A.F.M.~ter Elst, The Dirichlet-to-Neumann operator on rough domains, J. Differential Equations, 251 (2011), 2100--2124.

\bibitem{AM} W.~Arendt, R.~Mazzeo, Friedlander's eigenvalue inequalities and the Dirichlet-to-Neumann semigroup, Commun. Pure Appl. Anal., 11 (2012), 2201--2212. 

\bibitem{AR} K.~Arfi, A.~Rozanova-Pierrat, Dirichlet-to-Neumann or Poincar\'e-Steklov operator on fractals described by $d$-sets, Discrete Continuous Dynamical Systems -- S, 12 (2019), 1--26.

\bibitem{AM20} M.~G.~Armentano, A.~L.~Lombardi, The Steklov eigenvalue problem in a cuspidal domain, Numer. Math., 144 (2020), 237--270.

\bibitem{B21} L.~Barbu, Eigenvalues for anisotropic {$p$}-{L}aplacian under a {S}teklov-like boundary condition, Stud. Univ. Babe\c{s}-Bolyai Math., 66 (2021), 85--94.


\bibitem{BS} S.~Bergmann, M.~Schiffer, Kernel function and elliptic differential equations in mathematical physics, New York: Academic, (1953).

\bibitem{Br10} H.~Brezis, Functional analysis, Sobolev spaces and partial differential equations. Springer, New York, 2010. xiv+599 pp.

\bibitem{BR} J.~F.~Bonder, J.~D.Rossi, Existence results for the $p$-Laplacian with nonlinear boundary conditions, Journ. Math. Anal. Appl., 263 (2001), 195--223.

\bibitem{IC} I.~Cioranescu, Geometry of Banach Spaces, Duality Mappings and Nonlinear Problems, Springer, Dordrecht, 1990. xiv+260 pp.

\bibitem{CGGS} B.~Colbois, A.~Girouard, C.~Gordon, D.~Sher, Some recent developments on the Steklov eigenvalue problem, Revista Mat. Compl., 37 (2024), 1--161.

\bibitem{CPV} C.~Conca, J.~Planchard, M.~Vanninathan, Fluid and periodic structures, New York: Wiley, (1995).

\bibitem{var} P.~G.~Ciarlet, Linear and nonlinear functional analysis with applications, Society for Industrial and Applied Mathematics, Philadelphia, PA, (2013).

\bibitem{Dama} L.~Damascelli, Comparison theorems for quasilinear degenerate elliptic operators and applications to symmetry and monotonicity results, Ann. Inst. h. Poincar\'e Anal. Non Lin\'eaire, 15 (4) (1998), 493-516.

\bibitem{Dav} E.~B.~Davies, Spectral Theory and Differential Operators, Cambridge University Press, Cambridge, (1995).

\bibitem{Ercole} G.~Ercole, Solving an abstract nonlinear eigenvalue problem by the inverse iteration method, Bull. Braz. Math. Soc. (N.S.), 49 (2018), 577--591.

\bibitem{Fe69} H.~Federer, Geometric measure theory, Sp\-rin\-ger Verlag, Berlin, (1969).

\bibitem{FL} A.~Ferrero, P.~D.~Lamberti, Spectral stability of the Steklov problem, Nonlinear Anal., 222 (2022), 112989.

\bibitem{GP17} A.~Girouard, I.~Polterovich, Spectral geometry of the Steklov problem, J. Spectr. Theory, 7 (2017), 321--359.

\bibitem{GG94} V.~Gol'dshtein, L.~Gurov, Applications of change of variables operators for exact embedding theorems, Integral Equations Operator Theory 19 (1994), 1--24.

\bibitem{GU09} V.~Gol'dshtein, A.~Ukhlov, Weighted Sobolev spaces and embedding theorems, Trans. Amer. Math. Soc., 361 (2009), 3829--3850.

\bibitem{GU17} V.~Gol'dshtein, A.~Ukhlov, The spectral estimates for the Neumann-Laplace operator in space domains. 
Adv. in Math., 315 (2017), 166--193.

\bibitem{GV} V.~Gol'dshtein, M.~Ju.~Vasiltchik, Embedding theorems and boundary-value problems for cusp domains, Trans. Amer. Math. Soc., 362 (2010), 1963--1979.

\bibitem{KLP} M.~Karpukhin, J.~Lagac\'e, I.~Polterovich, Weyl's law for the Steklov problem on surfaces with rough boundary, Arch. Rational Mech. Anal., (2023) 247:77.

\bibitem{KUZ} P.~Koskela, A.~Ukhlov, Zh.~Zhu, The volume of the boundary of a Sobolev $(p,q)$-extension domain, J. Funct. Anal., 283 (2022), 109703.

\bibitem{MN10} N.~Mavinga, M.~N.~Nkashama, Steklov--Neumann eigenproblems and nonlinear elliptic equations with nonlinear boundary conditions, J. Differential Equations, 248 (2010), 1212--1229.

\bibitem{M} V.~Maz'ya, Sobolev spaces: with applications to elliptic partial differential equations, Springer: Berlin/Heidelberg, (2010).

\bibitem{MP} V.~G.~Maz'ya, S.~V.~Poborchi, Differentiable functions on bad domains, World Scientific Publishing Co., River Edge, NJ, (1997).

\bibitem{NT} S.~A.~Nazarov, J.~Taskinen, On the spectrum of the Steklov problem in a domain with a peak, Vestnik St. Petersburg Univ. Math., 41 (2008), 45--52.

\bibitem{RSS} G.~V.~Rozenblum, M.~A.~Shubin, M.~Z.~Solomyak, Spectral Theory of Differential Operators, Springer: Berlin/Heidelberg, (1994).

\bibitem{Steklov} W.~Stekloff, Sur les probl\'emes fondamentaux de la physique mathematique, Annales Sci. ENS, S\'er. 3, 19 (1902), 191--259 and 455--490.

\bibitem{XW} Ch.~Xia, Q.~Wang, Inequalities for the Steklov eigenvalues, Chaos, Solitons Fractals, 48 (2013), 61--67.


\end{thebibliography}
\end{document}